\newtheorem{thm}{Theorem}[section]
\newtheorem{prop}[thm]{Proposition}
\newtheorem{lemma}[thm]{Lemma}
\newtheorem{cor}[thm]{Corollary}
\newtheorem{problem}[thm]{Problem}
\theoremstyle{definition}
\newtheorem{definition}[thm]{Definition}
\newtheorem{rem}[thm]{Remark}
\newtheorem*{ack}{Acknowledgment}
\newcommand{\sq}{\hfill $\square$}
\newcommand{\kah}{K\"{a}hler}
\newcommand{\dol}{\sqrt{-1}\partial \overline{\partial}}
\newcommand{\hol}{H\"{o}lder}
\newcommand{\ma}{Monge-Amp$\grave{{\rm e}}$re}
\def\address#1#2{\begingroup
\noindent\parbox[t]{16cm}{%
\small{\scshape\ignorespaces#1}\par\vskip1ex
\noindent\small{\itshape E-mail address}%
\/: #2\par\vskip4ex}\hfill%
\endgroup}%
\title{Microscopic stability thresholds and constant scalar curvature K\"{a}hler metrics}
\author{Takahiro Aoi}
\date{\today}
\begin{document}
\maketitle

\begin{abstract}
In this paper, we directly prove that if the limit of microscopic stability thresholds introduced by Berman for a polarized manifold satisfies some condition, then there exists a unique constant scalar curvature K\"{a}hler metric.
This is an analogue of K.Zhang's result which is proved by the delta-invariant introduced by Fujita-Odaka.
This work is motivated by Berman's result which shows that if a Fano manifold is uniformly Gibbs stable, then there exists a unique K\"{a}hler-Einstein metric, without uniform K-stability.
We also give some sufficient conditions of the existence of a constant scalar curvature K\"{a}hler cone metric.
\end{abstract}

%%%%INTRODUCTION%%%%%%%%%%%%%%%%%%%%%%%%%%%%%%%%%%%%%%%%%%%%%%%%%%%%%%%%%%%%%%%%%%

%\tableofcontents

\section{Introduction}

In {\kah} geometry, the existence of canonical {\kah} metrics is a central problem.
{\kah}-Einstein metrics and constant scalar curvature {\kah} metrics are known as candidates of canonical {\kah} metrics.
It is well-known as the solution of the (uniform) 
Yau-Tian-Donaldson conjecture that a Fano manifold admits a unique  {\kah}-Einstein metric with positive Ricci curvature if and only if it is (uniformly) K-stable \cite{BBJ,CDS1,CDS2,CDS3,Ti,Zha}.
From the statistical mechanics point of view, Berman proposed a probabilistic approach to {\kah}-Einstein metrics in order to find a canonical way to construct such metrics.
He also showed that there exists a canonical sequence of probability measures which converges the volume form defined by the {\kah}-Einstein metric with negative Ricci curvature \cite{Be3}(see also \cite{Be1}).
However, a similar approach is a subtle problem in the Fano case, because the corresponding partition function is not well-defined in general.
For a Fano manifold $X$, Berman \cite{Be3} introduced the microscopic stability threshold $\gamma_k (X)$ at level $k$ and the notion of uniform Gibbs stability by taking their limit.
By the purely algebro-geometric way, Fujita-Odaka \cite{FO} proved that if a Fano manifold $X$ is uniformly Gibbs stable, then $X$ is uniformly K-stable, i.e., $X$ admits a unique {\kah}-Einstein metric.
This result is proved by the inequality $\delta_k(X) \geq \gamma_k (X)$, where $\delta_k(X)$ denotes the delta-invariant at level $k$ introduced by Fujita-Odaka, and the fact that their limit $\delta (X) = \lim_k \delta_k (X)$ characterizes uniform K-stability \cite{BJ,FO}.
After their work, Berman \cite{Be2} gives a direct proof which says that uniform Gibbs stability implies the existence of a unique {\kah}-Einstein metric, without uniform K-stability and the delta-invariant $\delta (X)$.

In this paper, we give an direct proof which shows that the limit of the microscopic stability thresholds $\gamma_k(L)$ for a polarized manifold $(X,L)$ (not necessarily Fano) satisfies the condition in Theorem 2.4 of K.Zhang's paper \cite{Zha}, then there exists a unique constant scalar curvature {\kah} metric.
Actually, as an application of the delta-invariant $\delta(L)$ for $(X,L)$, we can prove the same result by combining Fujita-Odaka's result \cite[Theorem 2.5]{FO} (or \cite[Corollary 3.4]{Be2}) for $(X,L)$ and K.Zhang's result \cite[Theorem 2.4]{Zha}.
In our proof, we only use the explicit inequality for the Mabuchi K-energy, the partition function and $\mathcal{J}_\chi$-functional.
Our proof can be applied to the existence of constant scalar curvature {\kah} metric with cone singularities along a divisor by using K.Zheng's variational characterization \cite{Zhe}.

Let $X$ be a projective manifold and $L$ be an ample line bundle over $X$.
Fix a {\kah} metric $\omega$ defined by a Hermitian metric $h$ on $L$, i.e., $\omega =\sqrt{-1} \Theta (L,h)$.
Here the symbol $\Theta (L,h)$ denotes the Chern curvature form of $h$.
We denotes $\mathcal{H}(L)$ the space of {\kah} metrics in the first Chern class $c_1 (L)$.
For $k \in \mathbb{Z}_{>0}$, we fix a basis $s_{i}^{(k)}, (i =1,2,...,N)$ of $H^0 (X , L^{\otimes k})$, where $N = N_k := \dim_{\mathbb{C}} H^0 (X , L^{\otimes k}) $.
We define the following holomorphic section of the line bundle $(L^{\otimes k})^{ \boxtimes N}$ over the product space $X^N := X \times ... \times X$ ($N$-times), which is called the {\it Slater determinant} in the statistical mechanics :
$$
\det S^{(k)} (x_1 , ... , x_N) := \det \left( s_{i}^{(k)} (x_j) \right)_{ij} \in H^0 \left(X^N , (L^{\otimes k})^{ \boxtimes N} \right).
$$

We fix a smooth probability measure $dV$ on $X$.
Let $f \geq 0$ be a function in $L^p (dV)$ for some $p >1$ with $\int_X f dV =1$.
We define a (possibly singular) probability measure $dV_f$ by
$$
dV_f := f dV .
$$
For $\gamma >0$, we define the {\it $k$-th partition function for $(X, L, f)$} by
\begin{equation*}
\mathcal{Z}_{N, f} (-\gamma) := \int_{X^N} \left| \det S^{(k)} \right|^{-2\gamma/k}_{h^k} dV_f^{\otimes N}.
\end{equation*}
Here, the parameter $- \gamma$ corresponds to the (negative) inverse temperature (see \cite{Be1}).
Since we consider the integral of the negative power of the function $\left| \det S^{(k)} \right|^2$, the partition function $\mathcal{Z}_{N, f} (-\gamma)$ may diverge.
By following \cite[Section 6]{Be3} and \cite[Section 3]{Be1}, we define the {\it microscopic stability threshold at level $k$} by
\begin{equation*}
\gamma_{k,f} (L) = \gamma_{N_k,f} (L):= \sup \left\{ \gamma \geq 0  \,\, \middle| \,\, \mathcal{Z}_{N, f} (-\gamma) < \infty \right\}.
\end{equation*}
Note that $\gamma_{k, f} (L)$ is well-defined, i.e., the finiteness of the partition function is independent of the choice of a basis $ ( s_{i}^{(k)} )_{i =1}^{N}$.
By following \cite{Be3} (see also \cite{Be1}), we write the infimum limit of the microscopic stability thresholds as
\begin{equation*}
\gamma_{ f}(L) := \liminf_{N_k \to \infty} \gamma_{N_k, f} (L).
\end{equation*}
To simplify, we write $\gamma_{k}(L) = \gamma_{k,f} (L)$ and $\gamma (L) = \gamma_f (L)$ for $f =1$.
Note that we can write $\gamma_k(L)$ as the log canonical threshold in birational geometry \cite{Be1,Be3,FO} : $$\gamma_k (L) = {\rm lct}_{\Delta} \left( X^N ; \frac{1}{k} ( \det S^{(k)} =0)  \right),$$where $\Delta \subset X^N$ is the diagonal.

Let $\eta$ be a $d$-closed smooth (1,1)-form on $X$.
We define the {\it Mabuchi K-energy for $f$ and $\eta$} which is a modification of the Mabuchi functional introduced by Mabuchi \cite{Ma}, by the following formula.
$$
\mathcal{M}_{f, \eta} (\phi) := {\rm Ent}_{dV_f} (V^{-1} \omega_\phi^n) + \mathcal{J}_{-{\rm Ric } \, dV + \eta} (\phi) , \,\,\,\, \phi \in \mathcal{H} (L).
$$
Here, ${\rm Ent}_{dV_f}$ is the relative entropy for $dV_f$ and $\mathcal{J}_{-{\rm Ric} \, dV + \eta }$ is the $\mathcal{J}_\chi$-functional with respect to the smooth $d$-closed (1,1)-form $\chi = -{\rm Ric} \, dV + \eta$ (see Section 2 in this paper).

The following theorem is the main result in this paper, which is an extension of Theorem 1.2 in \cite{Be2}.
\begin{thm}
\label{main theorem}
We fix a positive number $c$ such that $c \, \omega + {\rm Ric}\,dV \geq 0$.
Fix $\gamma >0$ and $\tau >0$.
We take a positive integer $k$ such that $ f \in L^{k\tau / (k\tau-\gamma(1+\tau))}  (dV)$.
Then, there exists a positive constant $C_1$ independent of $k, \gamma, \tau$ such that we have
\begin{eqnarray*}
&& \hspace{-40pt} -\frac{1}{N(1+\tau)} \log \mathcal{Z}_{N, f} (-\gamma(1+\tau)) +  \mathcal{J}_{-{\rm Ric} \, dV + \eta  + \gamma^\prime \omega } (\phi)\\
&&\leq \mathcal{M}_{f, \eta}( \phi)  + \frac{k\tau-\gamma }{ k(1+\tau)} \log \int_{X} f^{k\tau / (k\tau-\gamma(1+\tau))}  dV+ \frac{\gamma }{k} \log N + C_1\gamma(1- ck^{-1})
\end{eqnarray*}
for any $\phi \in \mathcal{H}(L)$, where $\gamma^\prime := \gamma (1-ck^{-1})(1-C_1 k^{-1}) $.
\end{thm}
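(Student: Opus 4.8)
The plan is to prove the inequality by bounding the partition function $\mathcal{Z}_{N,f}(-\gamma(1+\tau))$ from \emph{below} and then dividing by $-\tfrac{1}{N(1+\tau)}$; since this coefficient is negative, a lower bound on $\mathcal{Z}_{N,f}$ is exactly what the asserted upper bound on the left-hand side demands. The only probabilistic input is the Gibbs variational (Jensen) inequality. Writing $\mu_\phi := V^{-1}\omega_\phi^n$, which is a probability measure, and using the product measure $\mu_\phi^{\otimes N}$ as trial measure against the reference $dV^{\otimes N}$, one obtains
\[
\log \mathcal{Z}_{N,f}(-\gamma(1+\tau)) \ge -\frac{\gamma(1+\tau)}{k}\int_{X^N}\log|\det S^{(k)}|^2_{h^k}\,d\mu_\phi^{\otimes N} + N\int_X\log f\,d\mu_\phi - N\,{\rm Ent}_{dV}(\mu_\phi).
\]
Because ${\rm Ent}_{dV}(\mu_\phi)-\int_X\log f\,d\mu_\phi = {\rm Ent}_{dV_f}(\mu_\phi)$, the last two terms assemble precisely into the relative entropy appearing in $\mathcal{M}_{f,\eta}$; this is the reason for taking the Monge-Amp\`ere measure of $\phi$ as the trial measure.

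Next I would isolate the singular weight. The quantity $\int_X\log f\,d\mu_\phi$ is a priori uncontrolled (it may equal $-\infty$), so instead of keeping it I would insert a H\"older/Young splitting in the variable $f$, governed by the auxiliary parameter $\tau$: with conjugate exponents dictated by $\tau$ one trades $\int_X\log f\,d\mu_\phi$ for the finite quantity $\log\int_X f^{p}\,dV$ with $p = k\tau/(k\tau-\gamma(1+\tau))$, at the cost of the explicit prefactor $\tfrac{k\tau-\gamma}{k(1+\tau)}$ and of weakening the entropy coefficient from $1$ to $\tfrac{1}{1+\tau}$. The hypothesis $f\in L^{p}(dV)$ is exactly the finiteness condition for this term, and since $\int_X f^{p}\,dV\ge(\int_X f\,dV)^{p}=1$ by Jensen it enters with the favourable sign. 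Using the additivity of the $\mathcal{J}_\chi$-functional in $\chi$, namely $\mathcal{J}_{-{\rm Ric}\,dV+\eta+\gamma^\prime\omega}=\mathcal{J}_{-{\rm Ric}\,dV+\eta}+\gamma^\prime\mathcal{J}_\omega$, the term $\mathcal{J}_{-{\rm Ric}\,dV+\eta}$ cancels against the one inside $\mathcal{M}_{f,\eta}$, and after discarding the nonnegative slack $\tfrac{\tau}{1+\tau}{\rm Ent}_{dV_f}(\mu_\phi)\ge0$ the whole statement reduces to the single $f$-free, translation-invariant inequality
\[
\frac{\gamma}{Nk}\int_{X^N}\log|\det S^{(k)}|^2_{h^k}\,d\mu_\phi^{\otimes N} + \gamma^\prime\mathcal{J}_\omega(\phi) \le \frac{\gamma}{k}\log N + C_1\gamma(1-ck^{-1}).
\]

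To treat the Slater-determinant term I would invoke Hadamard's inequality $|\det S^{(k)}(x_1,\dots,x_N)|^2_{h^k}\le\prod_{j}B_k(x_j)$, where $B_k=\sum_i|s_i^{(k)}|^2_{h^k}$ is the Bergman function of the $L^2$-structure attached to $h^k$ and $\mu_\phi$, of total mass $\int_X B_k\,d\mu_\phi=N$. Jensen's inequality then contributes the term $\tfrac{\gamma}{k}\log N$ (together with a lower-order $O(\gamma/k)$ term absorbed into $C_1$ through Stirling), while the genuine $\phi$-dependence is extracted through $|\det S^{(k)}|^2_{h^k}=|\det S^{(k)}|^2_{h_\phi^k}e^{k\sum_j\phi(x_j)}$, with $h_\phi = h e^{-\phi}$, and the change of basis (Gram determinant) between the $\mu_0$-orthonormal and the $\mu_\phi$-orthonormal bases, $\mu_0:=V^{-1}\omega^n$. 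This produces the quantized Monge-Amp\`ere energy that must be matched against $\gamma^\prime\mathcal{J}_\omega(\phi)$.

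I expect this last comparison to be the main obstacle, and it is where the positivity hypothesis enters. Writing $k\omega+{\rm Ric}\,dV=(k-c)\omega+(c\omega+{\rm Ric}\,dV)\ge k(1-ck^{-1})\omega\ge0$ for $k\ge c$, the twisted bundle acquires nonnegative curvature, which is what makes an explicit, non-asymptotic lower bound for the quantized energy available and accounts for the factor $(1-ck^{-1})$; the uniform remainder in this Bergman-kernel/$L^2$ estimate then produces both the constant $C_1$ and the additional factor $(1-C_1k^{-1})$ in $\gamma^\prime=\gamma(1-ck^{-1})(1-C_1k^{-1})$. Combining the Gibbs lower bound, the H\"older splitting in $f$, and this quantized-energy estimate, and keeping the discarded terms $\tfrac{\tau}{1+\tau}{\rm Ent}_{dV_f}(\mu_\phi)$ and $\tfrac{k\tau-\gamma}{k(1+\tau)}\log\int_X f^{p}\,dV$ on the favourable side, yields the asserted inequality for every $\phi\in\mathcal{H}(L)$.
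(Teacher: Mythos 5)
Your opening move is genuinely different from the paper: you bound $\mathcal{Z}_{N,f}(-\gamma(1+\tau))$ from below by the Gibbs variational principle with the mean-field trial measure $\mu_\phi^{\otimes N}$, whereas the paper never does this. Instead it proves $\mathcal{M}_{f,\eta}(\phi)\geq \gamma\mathcal{D}_{-\gamma,f}(\phi)+\mathcal{J}_{-{\rm Ric}\,dV+\eta+\gamma\omega}(\phi)$ by the entropy--Legendre duality with test function $a=-\gamma\phi$ (Proposition \ref{Mabuchi and Ding}), bounds $-\frac{1}{\gamma(1+\tau)N}\log\mathcal{Z}_{N,f}$ by the approximate Ding functional $\mathcal{D}_{k,-\gamma,f}$ through two H\"older inequalities and the Berman--Boucksom identity $\Vert\det S^{(k)}\Vert^2_{L^2(he^{-\phi},dV^{\otimes N})}=N!\det H^{(k)}(\phi,dV)$, all taken with respect to the \emph{fixed} measure $dV$ (Proposition \ref{approx Ding}), and then compares $E_k$ with $E$ by Berndtsson convexity along weak geodesics (Lemma \ref{energy}). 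Your Gibbs step itself is valid (modulo the absolute-continuity caveat when $f$ vanishes, where both sides trivialize), and your bookkeeping of the entropy coefficient $\tfrac{1}{1+\tau}$ and of the nonnegative $f$-moment term is arithmetically consistent; note in passing that in your route the H\"older splitting in $f$ is actually superfluous, since $\log\int_X f^p\,dV\geq 0$ can be added to the right-hand side for free --- in the paper, the exponent $p=k\tau/(k\tau-\gamma(1+\tau))$ is needed to pair $F=\prod_i f(x_i)$ against $\Vert\det S^{(k)}\Vert^2_{L^2(dV^{\otimes N})}$, not against $\int_X\log f\,d\mu_\phi$.

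The genuine gap is at the step you yourself flag as the main obstacle, and it is not repairable with the tools you cite. After discarding the entropy slack, you must prove the $\tau$-free inequality $\frac{\gamma}{Nk}\int_{X^N}\log|\det S^{(k)}|^2_{h^k}\,d\mu_\phi^{\otimes N}+\gamma'\mathcal{J}_\omega(\phi)\leq\frac{\gamma}{k}\log N+C_1\gamma(1-ck^{-1})$. Your Hadamard step needs the mass identity $\int_X B_k\,d\mu_\phi=N$, which forces the orthonormalization to be taken in the \emph{moving} $L^2$ structure $(h_\phi^k,\mu_\phi)$; the Gram determinant you must then dominate by $\gamma'\mathcal{J}_\omega(\phi)$ is $-\frac{1}{Nk}\log\det H^{(k)}(\phi,\mu_\phi)$, the quantized energy with respect to the Monge--Amp\`ere measure of $\phi$ itself. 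The positivity you invoke, $k\omega+{\rm Ric}\,dV\geq k(1-ck^{-1})\omega$, is tailored to the fixed measure: in Lemma \ref{energy} Berndtsson's theorem is applied to the metric $(he^{-\psi_t^{(\epsilon)}})^k\,dV$ on $p^*(kL-K_X)$, whose curvature is $k(1-\epsilon)(p^*\omega+\dol\psi_t)+p^*(c\omega+{\rm Ric}\,dV)\geq 0$. Replacing $dV$ by $\mu_{\psi_t}=V^{-1}\omega_{\psi_t}^n$ inserts ${\rm Ric}\,\omega_{\psi_t}$ into the curvature, which is not bounded below by $-k\omega_{\psi_t}$ uniformly in $\phi$, and worse, $\omega_{\psi_t}^n$ degenerates identically along weak geodesics, so no convexity statement is available for the moving-measure functional; elementary bounds such as $\det H\leq\prod_i H_{ii}\leq\prod_i\sup_X(|s_i^{(k)}|^2_{h^k}e^{-k\phi})$ only control the oscillation $\sup_X\phi-\inf_X\phi$, which cannot beat the coercive term $\gamma'\mathcal{J}_\omega(\phi)$. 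Moreover, the entropy slack $\frac{\tau}{1+\tau}{\rm Ent}_{dV_f}(\mu_\phi)$ that you threw away is precisely the currency needed here: converting the $\mu_\phi^{\otimes N}$-mean of $\log|\det S^{(k)}|^2$ into the fixed-$dV$ quantity $\log\det H^{(k)}(\phi^{(\epsilon)},dV)$ is a Young/entropy-duality step costing $\frac{\gamma}{k}\,{\rm Ent}$, payable exactly because the hypothesis $k\tau>\gamma(1+\tau)$ gives $\frac{\gamma}{k}<\frac{\tau}{1+\tau}$ --- this is the paper's two H\"older inequalities in disguise. Retaining the slack and switching to the fixed measure would complete your argument, but at that point you have reconstructed Propositions \ref{approx Ding} and \ref{twist and approx}; as written, your reduced inequality is unsupported and your final paragraph does not supply a proof of it.
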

The parameter $\tau > 0$ is important to deal with the (possibly singular) density function $f \in L^p (dV)$.
By Theorem \ref{main theorem}, we immediately obtain the following.
\begin{thm}
\label{special}
We fix $\gamma \in (0, \gamma_{f} (L))$ so that $-c_1 (X) + [\eta] + \gamma c_1 (L) $ is a {\kah} class.
If $\mathcal{J}_{-{\rm Ric} \, dV +\eta + \gamma \omega }$ is coercive, then the Mabuchi K-energy $\mathcal{M}_{f, \eta}$ for $f$ and $\eta$ is also coercive.
\end{thm}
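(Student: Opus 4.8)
The plan is to turn Theorem \ref{main theorem} into a single $\phi$-independent lower bound of the Mabuchi K-energy by a $\mathcal{J}_\chi$-functional, and then convert the coercivity hypothesis into coercivity of $\mathcal{M}_{f,\eta}$. First I would fix the auxiliary parameters. Since $\gamma < \gamma_f(L) = \liminf_{N_k\to\infty}\gamma_{N_k,f}(L)$, I choose $\tau>0$ so small that $\gamma(1+\tau) < \gamma_f(L)$. By the definition of the infimum limit there is then a $k_0$ with $\gamma(1+\tau) < \gamma_{N_k,f}(L)$ for all $k\ge k_0$, which is exactly the statement that $\mathcal{Z}_{N,f}(-\gamma(1+\tau))$ is finite (and it is positive, being the integral of a positive integrand against a probability measure). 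Enlarging $k$ if necessary I also arrange $k\tau > \gamma(1+\tau)$, and, since $f\in L^p(dV)$ for some $p>1$ while the exponent $k\tau/(k\tau-\gamma(1+\tau))\to 1^+$ as $k\to\infty$ on the finite measure space $(X,dV)$, the integrability hypothesis $f\in L^{k\tau/(k\tau-\gamma(1+\tau))}(dV)$ of Theorem \ref{main theorem} holds for all large $k$. I fix one such large $k$.

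For this fixed $k$, rearranging the inequality of Theorem \ref{main theorem} gives
\[
\mathcal{M}_{f,\eta}(\phi) \ge \mathcal{J}_{-{\rm Ric}\,dV + \eta + \gamma'\omega}(\phi) - B_k, \qquad \phi\in\mathcal{H}(L),
\]
where $\gamma' = \gamma(1-ck^{-1})(1-C_1 k^{-1})$ and
\[
B_k = \tfrac{1}{N(1+\tau)}\log\mathcal{Z}_{N,f}(-\gamma(1+\tau)) + \tfrac{k\tau-\gamma}{k(1+\tau)}\log\!\int_X f^{k\tau/(k\tau-\gamma(1+\tau))}dV + \tfrac{\gamma}{k}\log N + C_1\gamma(1-ck^{-1})
\]
is a finite constant independent of $\phi$: finiteness of the first term is precisely the finiteness of the partition function secured above, and the remaining terms are manifestly finite. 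It therefore remains to pass from the assumed coercivity of $\mathcal{J}_{-{\rm Ric}\,dV+\eta+\gamma\omega}$ to a coercive lower bound for $\mathcal{J}_{-{\rm Ric}\,dV+\eta+\gamma'\omega}$, where $0<\gamma'<\gamma$ and $\gamma-\gamma' = O(k^{-1})\to 0$.

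Here I would use that the $\mathcal{J}_\chi$-functional is linear in the form $\chi$, so that
\[
\mathcal{J}_{-{\rm Ric}\,dV+\eta+\gamma'\omega}(\phi) = \mathcal{J}_{-{\rm Ric}\,dV+\eta+\gamma\omega}(\phi) - (\gamma-\gamma')\,\mathcal{J}_{\omega}(\phi),
\]
together with the standard comparison $\mathcal{J}_\omega(\phi)\le C_2\,J(\phi)+C_3$ of the functional $\mathcal{J}_\omega$ with the Aubin functional $J$. Writing the coercivity hypothesis in the form $\mathcal{J}_{-{\rm Ric}\,dV+\eta+\gamma\omega}(\phi)\ge \epsilon\,J(\phi)-C$, I obtain
\[
\mathcal{M}_{f,\eta}(\phi) \ge \big(\epsilon-(\gamma-\gamma')C_2\big)\,J(\phi) - \big(C+(\gamma-\gamma')C_3+B_k\big).
\]
Since $\gamma-\gamma'\to 0$, enlarging $k$ once more makes $(\gamma-\gamma')C_2 < \epsilon/2$ (and, the {\kah} cone being open, keeps $-c_1(X)+[\eta]+\gamma' c_1(L)$ {\kah}), so the coefficient of $J(\phi)$ stays positive. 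This yields $\mathcal{M}_{f,\eta}(\phi)\ge (\epsilon/2)J(\phi)-C'$ for all $\phi\in\mathcal{H}(L)$, i.e. the coercivity of $\mathcal{M}_{f,\eta}$.

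The main obstacle I anticipate is the simultaneous bookkeeping of the smallness conditions on $(\tau,k)$: keeping $\gamma(1+\tau)$ below $\gamma_f(L)$ so that the partition function stays finite, keeping the integrability exponent below $p$, and making $\gamma-\gamma'$ small enough to be absorbed into the coercivity constant $\epsilon$ — all of which must be compatible for a single fixed large $k$. The only structural input beyond this is the linearity of $\mathcal{J}_\chi$ in $\chi$ and the control of the single term $\mathcal{J}_\omega$ by $J$ with a constant independent of $\phi$; neither is deep, but both must be invoked explicitly.
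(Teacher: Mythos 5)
Your proof is correct and is essentially the argument the paper intends: the paper deduces Theorem \ref{special} from Theorem \ref{main theorem} with no written proof at all (``By Theorem \ref{main theorem}, we immediately obtain the following''), and your parameter bookkeeping --- choosing $\tau$ small so that $\gamma(1+\tau)<\gamma_f(L)$, whence $\mathcal{Z}_{N,f}(-\gamma(1+\tau))<\infty$ for all large $k$ by definition of the $\liminf$; noting the exponent $k\tau/(k\tau-\gamma(1+\tau))\to 1^+$ so the $L^p$-hypothesis on $f$ is met; checking every term of your constant $B_k$ is finite --- is exactly the content hidden in ``immediately''. The one ingredient you invoke that the paper never states is the upper bound $\mathcal{J}_\omega(\phi)\le C_2\,J(\phi)+C_3$: the paper records only the lower bound $\mathcal{J}_\omega\ge A\,J-B$ (Lemma \ref{Jcoer}). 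Your bound is nonetheless a standard fact: by Lemma \ref{J1}, $\mathcal{J}_\omega=I-J$ with $I(\phi)=\frac{1}{V}\int_X\phi\,(\omega^n-\omega_\phi^n)$, and the classical Aubin inequality $\frac{1}{n+1}I\le J$ gives $\mathcal{J}_\omega\le n\,J$, so $C_2=n$, $C_3=0$ suffice; you should cite or prove this, since it is the only step not covered by the paper's stated lemmas. An alternative that stays entirely within those lemmas is to run Theorem \ref{main theorem} at some $\tilde\gamma\in(\gamma,\gamma_f(L))$ (the interval is open, so this is possible) with $k$ so large that $\tilde\gamma'=\tilde\gamma(1-ck^{-1})(1-C_1k^{-1})\ge\gamma$; then linearity gives $\mathcal{J}_{-\mathrm{Ric}\,dV+\eta+\tilde\gamma'\omega}=\mathcal{J}_{-\mathrm{Ric}\,dV+\eta+\gamma\omega}+(\tilde\gamma'-\gamma)\mathcal{J}_\omega$, and the surplus term is bounded below by $(\tilde\gamma'-\gamma)(A\,J-B)$ via Lemma \ref{Jcoer}, so the coercivity hypothesis is only reinforced rather than eroded. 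The two routes are equally short: yours perturbs the twist downward and needs the reverse $I$--$J$ comparison; the other perturbs $\gamma$ upward and uses only the paper's Lemma \ref{Jcoer}. Finally, your aside about keeping $-c_1(X)+[\eta]+\gamma' c_1(L)$ K\"{a}hler is harmless but unnecessary for your argument, since you never use positivity of that class --- only the linearity of $\chi\mapsto\mathcal{J}_\chi$.
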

Note that the condition in this theorem is an analytic analogue for $\gamma_{f} (L)$ of the notion of special K-stability introduced by Hattori \cite[Definition 3.10]{Ha} (see also the solution of the (uniform) Lejmi-Sz\'{e}kelyhidi conjecture \cite[Conjecture 1]{LS} by G.Chen \cite{Gao}).

If $X$ is Fano, we can choose the reference data so that $\omega = {\rm Ric}\, dV$ without loss of generality.
In this case, we have $\mathcal{J}_{-{\rm Ric}\, dV + \gamma^\prime \omega }(\phi) =  (\gamma^\prime - 1) E^* (V^{-1} \omega_\phi^n)$ where $E^*$ is the pluricomplex energy (see \cite[Section 1]{BBEGZ}).
By setting $f=1$, Theorem \ref{main theorem} generalizes Theorem 1.2 in \cite{Be2} because the difference $\mathcal{M}_{1,0} - \mathcal{J}_{-{\rm Ric}\, dV + \gamma^\prime \omega }$ is exactly equal to the twisted Mabuchi functional for (negative) inverse temperature $-\gamma^\prime$ (equivalently, the free energy functional denoted by $F_{-\gamma^\prime }$).
By using the inequality in Theorem \ref{special}, we immediately obtain the following corollary which has been already proved by Fujita-Odaka and Berman \cite{Be2,FO}.
\begin{cor}[\cite{Be2,FO}]
Assume that $X$ is Fano and $L := -K_X$.
If $X$ is uniformly Gibbs stable, i.e., $\gamma (X) := \gamma (-K_X) >1$, then $X$ admits a unique {\kah}-Einstein metric.
\end{cor}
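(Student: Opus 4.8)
The plan is to apply Theorem \ref{special} to the anticanonically polarized Fano manifold and then feed the resulting coercivity into the variational existence theory for \kah-Einstein metrics. First I would fix reference data adapted to the Fano condition, exactly as indicated before the corollary: a smooth volume form $dV$ whose Ricci form represents the class, so that $\omega = {\rm Ric}\,dV \in c_1(-K_X) = c_1(X)$, together with $f \equiv 1$ and $\eta = 0$. With $f \equiv 1$ the integrability requirement $f \in L^{k\tau/(k\tau - \gamma(1+\tau))}(dV)$ of Theorem \ref{main theorem} is automatic, and $\gamma_f(L) = \gamma(-K_X) = \gamma(X)$. Uniform Gibbs stability means $\gamma(X) > 1$, so I may fix $\gamma$ with $1 < \gamma < \gamma(X)$, in particular $\gamma \in (0, \gamma_f(L))$.

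Next I would verify the two hypotheses of Theorem \ref{special} for this data. The cohomological condition holds because, with $\eta = 0$ and $L = -K_X$, one has $-c_1(X) + [\eta] + \gamma c_1(L) = (\gamma - 1) c_1(X)$, which is a \kah\ class precisely since $X$ is Fano and $\gamma > 1$. For the coercivity hypothesis I would use the normalization recorded in the excerpt: when $\omega = {\rm Ric}\,dV$ one has $\mathcal{J}_{-{\rm Ric}\,dV + \gamma\omega}(\phi) = (\gamma - 1) E^*(V^{-1}\omega_\phi^n)$, a strictly positive multiple (as $\gamma > 1$) of the pluricomplex energy; since this is the $\mathcal{J}_\chi$-functional of the positive class $(\gamma-1)c_1(L)$ proportional to the polarization, it is coercive. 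Theorem \ref{special} then gives that the Mabuchi K-energy $\mathcal{M}_{1,0}$ is coercive on $\mathcal{H}(-K_X)$.

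Finally I would upgrade this coercivity to existence and uniqueness of a \kah-Einstein metric. In the present normalization the identity $\mathcal{M}_{1,0} - \mathcal{J}_{-{\rm Ric}\,dV + \gamma'\omega} = F_{-\gamma'}$ noted before the corollary specializes at $\gamma' = 1$ to $\mathcal{M}_{1,0} = F_{-1}$, the free energy functional whose minimizers are \kah-Einstein potentials; equivalently, a constant scalar curvature metric in $c_1(-K_X)$ is automatically \kah-Einstein, because its Ricci potential is harmonic and hence constant. Thus the coercivity of $\mathcal{M}_{1,0}$ produces, through the established variational characterization (the solution of the uniform Yau--Tian--Donaldson correspondence cited in the introduction, or Chen--Cheng applied to the K-energy), a \kah-Einstein metric on $X$; genuine coercivity, taken without any modification by ${\rm Aut}^0(X)$, forces this automorphism group to be trivial, so Bando--Mabuchi uniqueness yields a single \kah-Einstein metric.

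The step carrying the real weight is this last passage from coercivity to an actual solution of the \kah-Einstein equation: Theorem \ref{special} is designed to extract coercivity cheaply from $\gamma(X) > 1$, whereas producing a genuine metric is the hard nonlinear analysis, which I would import wholesale rather than reprove. A secondary bookkeeping point, already absorbed into Theorem \ref{special}, is the limiting procedure inside Theorem \ref{main theorem} --- letting $k \to \infty$ and $\tau \to 0$ so that $\gamma' \to \gamma$ while keeping $(\gamma'-1)c_1(X)$ positive and the partition function $\mathcal{Z}_{N, f}(-\gamma(1+\tau))$ finite for $\gamma(1+\tau)$ just below $\gamma_{N_k}(-K_X)$ --- but this introduces no new idea.
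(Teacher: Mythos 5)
Your proposal is correct and takes essentially the same route as the paper, which likewise deduces the corollary from Theorem \ref{special} with $f=1$, $\eta=0$, $\omega = {\rm Ric}\,dV$ and any $\gamma \in (1,\gamma(X))$, noting that $-c_1(X)+\gamma c_1(-K_X)=(\gamma-1)c_1(X)$ is {\kah} and that $\mathcal{J}_{(\gamma-1)\omega}$ is coercive by Lemma \ref{Jcoer}, before invoking the Chen--Cheng/Berman--Darvas--Lu variational characterization (with cscK in $c_1(-K_X)$ automatically {\kah}-Einstein). Your closing detour through triviality of ${\rm Aut}^0(X)$ and Bando--Mabuchi is harmless but unnecessary, since the cited theorem already packages both existence and uniqueness.
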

Here, the symbol $K_X$ denotes the canonical line bundle of $X$.

We can apply Theorem \ref{special} to the study of (possibly singular) constant scalar curvature {\kah} metrics.
Firstly, we set $f :=1$ and $\eta := 0$.
We define the following intersection number $\mu (L)$ which is proportional to the average value of the scalar curvature and the nef threshold $s(L)$ by
$$
\mu(L) := \frac{-K_X L^{n-1}}{L^n}, \,\,\, s (L) := \sup \left\{ s \in \mathbb{R} \, \middle| \, -K_X  - sL >0   \right\}.
$$
By using a sufficient condition of the coercivity of the $\mathcal{J}_\chi$-functional (\cite[Main Theorem]{We},\cite[Proposition 21,22]{CS}, see also \cite{Gao}) and a variational characterization of constant scalar curvature {\kah} metrics by Chen-Cheng and Berman-Darvas-Lu (\cite{CC2,BDL2}), we obtain the following corollary which is the $\gamma$-version of K.Zhang's result \cite[Theorem 2.4]{Zha}.

\begin{cor}
Assume that $K_X + \gamma (L) L$ is ample and $\gamma(L) > n \mu(L) - (n-1) s(L)$.
Then, there exists a unique constant scalar curvature {\kah} metric in $c_1 (L)$.
\end{cor}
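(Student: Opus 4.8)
The plan is to combine Theorem~\ref{special} with the known cohomological sufficient condition for coercivity of the $\mathcal{J}_\chi$-functional and with the variational characterization of cscK metrics. Throughout we set $f := 1$ and $\eta := 0$, so that $\mathcal{M}_{1,0}(\phi) = {\rm Ent}_{dV}(V^{-1}\omega_\phi^n) + \mathcal{J}_{-{\rm Ric}\,dV}(\phi)$ is exactly the Mabuchi K-energy in its entropy form. By Theorem~\ref{special}, to prove that $\mathcal{M}_{1,0}$ is coercive it suffices to exhibit a single $\gamma \in (0, \gamma(L))$ for which $-c_1(X) + \gamma c_1(L) = c_1(K_X + \gamma L)$ is a \kah\ class and $\mathcal{J}_{-{\rm Ric}\,dV + \gamma \omega}$ is coercive. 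Once coercivity of $\mathcal{M}_{1,0}$ is established, the existence of a cscK metric in $c_1(L)$ follows from the variational characterization of Chen--Cheng and Berman--Darvas--Lu \cite{CC2,BDL2}, and strong coercivity forces the reduced automorphism group to be trivial, so that the uniqueness theorem for cscK metrics in a fixed class applies.

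For the coercivity of $\mathcal{J}_\chi$ I would invoke the cohomological sufficient condition (\cite{We,CS}, see also \cite{Gao}): writing $[\chi] = c_1(K_X + \gamma L)$ and $[\omega] = c_1(L)$, and setting $c := n[\chi]\cdot[\omega]^{n-1}/[\omega]^n$, the functional $\mathcal{J}_\chi$ is coercive provided the class $c[\omega] - (n-1)[\chi]$ is \kah. A direct intersection computation using $\mu(L) = -K_X L^{n-1}/L^n$ gives $c = n(\gamma - \mu(L))$, hence
\begin{equation*}
c[\omega] - (n-1)[\chi] = (n-1)\,c_1(-K_X) + \big(\gamma - n\mu(L)\big)\,c_1(L).
\end{equation*}
The decisive step is to rewrite this through the nef threshold $s(L)$: since $-K_X - s(L)L$ lies on the boundary of the ample cone it is nef, so
\begin{equation*}
c[\omega] - (n-1)[\chi] = (n-1)\,c_1\big(-K_X - s(L)L\big) + \big((n-1)s(L) + \gamma - n\mu(L)\big)\,c_1(L),
\end{equation*}
a sum of a nef class and a multiple of the ample class $c_1(L)$. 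Thus $c[\omega] - (n-1)[\chi]$ is \kah\ as soon as $\gamma > n\mu(L) - (n-1)s(L)$, which is precisely the hypothesis evaluated at $\gamma = \gamma(L)$.

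Finally, I would address the boundary value $\gamma = \gamma(L)$, since Theorem~\ref{special} requires $\gamma$ strictly below $\gamma(L)$. Both hypotheses, ampleness of $K_X + \gamma L$ and the strict inequality $\gamma > n\mu(L) - (n-1)s(L)$, are open conditions in $\gamma$; since they hold at $\gamma(L)$, they persist for all $\gamma$ in some interval $(\gamma(L) - \varepsilon, \gamma(L))$ by openness of the ample cone and strictness of the inequality. Choosing any such $\gamma$ verifies both requirements and places us within the scope of Theorem~\ref{special}, yielding coercivity of the (now $\gamma$-independent) functional $\mathcal{M}_{1,0}$, and hence the desired cscK metric together with its uniqueness.

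The main obstacle I anticipate is not the cohomological computation but making sure that the invoked sufficient condition for $\mathcal{J}$-coercivity is stated with the correct normalization of $c$ and genuinely produces \emph{coercivity} of $\mathcal{J}_\chi$ (rather than merely the existence of a $\mathcal{J}$-critical metric) in the precise form fed into Theorem~\ref{special}; a secondary point requiring care is that the coercivity of $\mathcal{M}_{1,0}$ obtained be of the strong, automorphism-free type, so that both existence and uniqueness of the cscK metric follow cleanly from \cite{CC2,BDL2}.
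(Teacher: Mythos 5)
Your proposal is correct and follows essentially the same route the paper intends: apply Theorem~\ref{special} with $f=1$, $\eta=0$ at a $\gamma$ slightly below $\gamma(L)$ (both hypotheses being open in $\gamma$, with $\gamma(L)>0$ guaranteed by the $\alpha$-invariant bound the paper notes), verify coercivity of $\mathcal{J}_{-{\rm Ric}\,dV+\gamma\omega}$ via the Weinkove/Collins--Sz\'ekelyhidi criterion through exactly your intersection computation $c=n(\gamma-\mu(L))$ and the nef-threshold decomposition, and conclude by the Chen--Cheng/Berman--Darvas--Lu variational characterization. Your final caution about the automorphism group is already absorbed in the cited theorem, which states coercivity of the Mabuchi K-energy is equivalent to existence \emph{and} uniqueness of the cscK metric.
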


\begin{rem}
Note that Fujita-Odaka \cite[Theorem 2.5]{FO} proved the inequality $\delta_k (L) \geq \gamma_k (L)$ for any $k$.
Moreover, Berman \cite[Corollary 3.4]{Be2} proved its analytic version, i.e., the analytic delta-invariant $\delta^A_k(L)$ at level $k$ which is the coercivity thresholds of the quantized Ding functional, satisfies $\delta^A_k(L) \geq \gamma_k (L)$.
(Actually, these results are equivalent to each other by using the result of Rubinstein-Tian-Zhang \cite[Theorem 2.8]{RTZ}.)
By taking the limit (for more details, see \cite{BJ,FO}), we obtain $\delta (L) = \lim_k \delta_k (L) \geq \gamma (L)$, so K.Zhang's result \cite[Theorem 2.4]{Zha} implies that the corollary above holds.
Note that our approach in this paper doesn't use the delta-invariant $\delta(L)$.
\end{rem}

Secondly, we consider the singular case.
We refer the works by K.Zheng \cite{Zhe2,Zhe} to the readers for the details of {\kah} metrics with cone singularities.
Assume that $D$ is a smooth hypersurface in $X$ and $s_D$ is a defining section of $D$.
We take $b \in (0,1)$.
Fix a smooth Hermitian metric $| \cdot |$ on the line bundle $\mathscr{O} (D)$ associated with $D$.
We set $\eta_b := (1-b) \sqrt{-1} \Theta (\mathscr{O} (D), | \cdot |)$.
By scaling, we can define the following  singular probability measure by
$$
dV_{f_b} = f_b dV := | s_D |^{-2(1-b)} dV.
$$
Note that there exists $p>1$ such that $f_b \in L^p (dV)$.
In this case, note that we can also write $\gamma_{k, f_b}(L)$ as the log canonical threshold for the pair $\left(X^N , (1-b) \sum_{i=1}^{N} \pi_i^* D \right)$, i.e.,
$$
\gamma_{k, f_b} (L) = {\rm lct}_{\Delta} \left( X^N , (1-b) \sum_{i=1}^{N} \pi_i^* D \, ; \frac{1}{k} ( \det S^{(k)} =0)\right),
$$
where $\pi_i : X^N \to X$ is the $i$-th projection (see also Gibbs stability for pairs in \cite[Section 5]{Be4}).
In addition, the Mabuchi K-energy $\mathcal{M}_{f_b , \eta_b} $ corresponds to the log Mabuchi K-energy for cone angle $2\pi b$ (see \cite{Zhe}).
As before, we set the following :
$$
\mu_b(L) := \frac{-(K_X + (1-b)D )L^{n-1}}{L^n}, \,\,\, s_b (L) := \sup \left\{ s \in \mathbb{R} \, \middle| \, -(K_X + (1-b)D) - sL >0   \right\}.
$$

By K.Zheng's variational characterization of constant scalar curvature {\kah} cone metrics \cite{Zhe}, we obtain the following corollary.
\begin{cor}
\label{cone cscK}
Assume that $K_X + (1-b) D  +\gamma_{f_b} (L) L$ is ample and $\gamma_{f_b} (L) > n \mu_b (L) - (n-1) s_b (L)$.
Then, there exists a unique constant scalar curvature {\kah} metric with cone singularities along $D$ with angle $2\pi b$.
\end{cor}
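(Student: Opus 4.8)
The plan is to follow the same route as the preceding (smooth) corollary, replacing the reference data $(f,\eta)=(1,0)$ by the cone data $(f_b,\eta_b)$ and substituting K.Zheng's variational characterization \cite{Zhe} for the Chen--Cheng and Berman--Darvas--Lu characterization at the final step. First I would use the openness of the hypotheses to produce a usable value of $\gamma$. Since $K_X+(1-b)D+\gamma_{f_b}(L)L$ is ample and ampleness is an open condition in $\gamma$, and since the strict inequality $\gamma_{f_b}(L)>n\mu_b(L)-(n-1)s_b(L)$ holds, I can choose $\gamma$ with $\max\{0,\,n\mu_b(L)-(n-1)s_b(L)\}<\gamma<\gamma_{f_b}(L)$ such that $K_X+(1-b)D+\gamma L$ remains ample. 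In particular $\gamma\in(0,\gamma_{f_b}(L))$ and the class $-c_1(X)+[\eta_b]+\gamma c_1(L)=c_1(K_X+(1-b)D+\gamma L)$ is {\kah}, so the hypotheses of Theorem \ref{special} are in place once I verify coercivity of the associated $\mathcal{J}_\chi$-functional.

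Second, I would check coercivity of $\mathcal{J}_{-{\rm Ric}\,dV+\eta_b+\gamma\omega}$ by the cohomological sufficient condition of Weinkove, Chen--Song and G.Chen (\cite{We,CS,Gao}). Writing $\chi$ for a representative of $c_1(K_X+(1-b)D+\gamma L)$ and $[\omega]=c_1(L)$, set $c:=n\frac{[\chi]\cdot c_1(L)^{n-1}}{c_1(L)^n}=n(\gamma-\mu_b(L))$. A direct intersection computation then gives
\[
c\,[\omega]-(n-1)[\chi]=\big(\gamma-n\mu_b(L)+(n-1)s_b(L)\big)\,c_1(L)+(n-1)\,c_1\!\big(-(K_X+(1-b)D)-s_b(L)L\big).
\]
The second summand is nef by the definition of $s_b(L)$, while the coefficient of $c_1(L)$ equals $\gamma-\big(n\mu_b(L)-(n-1)s_b(L)\big)>0$ by our choice of $\gamma$; hence $c[\omega]-(n-1)[\chi]$ is a {\kah} class, which is exactly the sufficient condition for coercivity of $\mathcal{J}_\chi$.

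Third, Theorem \ref{special} then yields coercivity of the Mabuchi K-energy $\mathcal{M}_{f_b,\eta_b}$, which, as recorded in the excerpt, is the log Mabuchi K-energy for cone angle $2\pi b$. Finally I would invoke K.Zheng's variational characterization of constant scalar curvature {\kah} cone metrics \cite{Zhe}: coercivity of the log Mabuchi K-energy is equivalent to the existence of a cscK cone metric along $D$ with angle $2\pi b$, and the same theory supplies uniqueness.

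The main obstacle will be the last step rather than the numerical bookkeeping: one must ensure that the entropy-plus-energy functional $\mathcal{M}_{f_b,\eta_b}$ furnished by Theorem \ref{special}, built from the singular measure $dV_{f_b}=|s_D|^{-2(1-b)}dV$ with $f_b\in L^p(dV)$, genuinely coincides with the log Mabuchi K-energy in the precise function space in which K.Zheng's characterization is formulated, so that coercivity transfers correctly to the cone setting. The role of the parameter $\tau$ together with the $L^p$-integrability hypothesis in Theorem \ref{main theorem} is precisely to accommodate this singular density, so the compatibility is expected, but matching the two frameworks at the level of regularity and admissible potentials is where the care is required.
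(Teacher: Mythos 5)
Your proposal is correct and takes essentially the same route the paper intends for Corollary \ref{cone cscK}: pick $\gamma$ slightly below $\gamma_{f_b}(L)$ so that ampleness of $K_X+(1-b)D+\gamma L$ and the strict numerical inequality persist, verify the Weinkove--Collins--Sz\'ekelyhidi coercivity criterion for $\mathcal{J}_{-{\rm Ric}\,dV+\eta_b+\gamma\omega}$ by exactly your intersection computation with the nef class $-(K_X+(1-b)D)-s_b(L)L$, apply Theorem \ref{special}, and conclude with K.Zheng's variational characterization. The compatibility issue you flag at the end is precisely what the paper's remark at the end of Section 2 settles, namely that coercivity of $\mathcal{M}_{f_b,\eta_b}$ on $\mathcal{H}(L)$ is equivalent to coercivity on the finite energy space where Zheng's theorem lives, via the entropy regularization of \cite{BDL1} and the $d_1$-continuity of $\mathcal{J}_\chi$ and $J$ from \cite{DR}.
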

As an application of Corollary \ref{cone cscK}, for smooth ample divisors of large degree, we obtain the following result which is an analogue on Theorem 1.10 (or Theorem 5.2) in \cite{AHZ}.
\begin{cor}
\label{large degree}
There exists $m_0 \in \mathbb{Z}_{>0}$ which depends only on $X, L$ and $ b \in (0,1]$ such that for any $m \geq m_0$ and a smooth divisor $D \in |mL|$, there exists a unique constant scalar curvature {\kah} metric with cone singularities along $D$ with angle $2\pi b$.
More precisely, we can find a such integer $m_0$ so that $n \mu (L) - (n-1) s (L) - (1-b) m_0 < 0$ and $K_X + m_0L$ is ample.
\end{cor}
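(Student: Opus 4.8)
The plan is to deduce this from Corollary~\ref{cone cscK} by checking its two hypotheses for every smooth $D \in |mL|$ as soon as $m$ is large, and then to verify that the resulting bound $m_0$ depends only on $X$, $L$ and $b$. The key structural point is that $D \in |mL|$ forces $D$ to be linearly equivalent to $mL$, so $[D] = m\,c_1(L)$ in cohomology and $D \cdot L^{n-1} = m\,L^n$. Hence every numerical quantity attached to the pair $(X, D)$ that appears in Corollary~\ref{cone cscK} can be expressed through the quantities $\mu(L)$, $s(L)$ of $(X, L)$ together with $m$ and $b$.

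First I would record the two identities
\begin{equation*}
\mu_b(L) = \mu(L) - (1-b)m, \qquad s_b(L) = s(L) - (1-b)m,
\end{equation*}
the former from $-(K_X + (1-b)D)\cdot L^{n-1} = -K_X\cdot L^{n-1} - (1-b)m\,L^n$, and the latter from $-(K_X+(1-b)D) - sL \sim -K_X - ((1-b)m + s)L$. Subtracting, the numerical threshold appearing in Corollary~\ref{cone cscK} becomes affine in $m$:
\begin{equation*}
n\,\mu_b(L) - (n-1)\,s_b(L) = n\,\mu(L) - (n-1)\,s(L) - (1-b)m,
\end{equation*}
with negative slope $-(1-b)$ for $b < 1$. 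The condition $\gamma_{f_b}(L) > n\,\mu_b(L) - (n-1)\,s_b(L)$ therefore holds as soon as the right-hand side is negative, because $\gamma_{f_b}(L) \geq 0$ always: at $\gamma = 0$ one has $\mathcal{Z}_{N, f_b}(0) = \left( \int_X f_b\, dV \right)^N < \infty$, the finiteness being exactly the statement that $f_b = |s_D|^{-2(1-b)} \in L^1(dV)$ for $b \in (0,1]$, so $\gamma = 0$ lies in the defining set of $\gamma_{N_k, f_b}(L)$ for every $k$. Thus it suffices to impose $n\,\mu(L) - (n-1)\,s(L) - (1-b)m < 0$.

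It remains to secure the ampleness hypothesis. Using $[D] = m\,c_1(L)$ and $\gamma_{f_b}(L) \geq 0$ again,
\begin{equation*}
K_X + (1-b)D + \gamma_{f_b}(L)\,L \sim K_X + \big( (1-b)m + \gamma_{f_b}(L) \big) L,
\end{equation*}
whose coefficient tends to $+\infty$ with $m$ for fixed $b < 1$; since $L$ is ample, this class is ample once $m$ is large, and it suffices to choose $m_0$ with $K_X + (1-b)m_0 L$ ample, which in particular yields the stated condition that $K_X + m_0 L$ be ample. Taking $m_0$ large enough that both $n\,\mu(L) - (n-1)\,s(L) - (1-b)m_0 < 0$ and $K_X + (1-b)m_0 L$ is ample, both hypotheses hold for every $m \geq m_0$ and every smooth $D \in |mL|$, and Corollary~\ref{cone cscK} produces the unique constant scalar curvature \kah\ metric with cone singularities along $D$ of angle $2\pi b$. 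The only point requiring care is that $m_0$ must be uniform in the choice of $D$; this is guaranteed because the argument feeds into Corollary~\ref{cone cscK} only the cohomological data $\mu(L), s(L)$ and the divisor-independent bound $\gamma_{f_b}(L) \geq 0$, all of which detect $D$ solely through its class $m\,c_1(L)$.
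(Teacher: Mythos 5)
Your proof is correct and follows essentially the same route as the paper: reduce to Corollary \ref{cone cscK}, use $D \sim mL$ to turn $\mu_b(L)$ and $s_b(L)$ into affine functions of $m$ so that $n\mu_b(L) - (n-1)s_b(L) = n\mu(L) - (n-1)s(L) - (1-b)m$ becomes negative for large $m$, and then choose $m_0$ uniformly to secure ampleness. The only (harmless) deviations are that you exploit the exact equality $s_b(L) = s(L) - (1-b)m$ where the paper derives the one-sided bound via superadditivity of nef thresholds, and you replace the paper's appeal to the $\alpha$-invariant lower bound $\gamma_{f_b}(L) > 0$ by the elementary, divisor-independent observation $\gamma_{f_b}(L) \geq 0$ (finiteness of $\mathcal{Z}_{N,f_b}(0)$ since $f_b \in L^1(dV)$), which suffices because the numerical threshold is made strictly negative; your choice of $m_0$ with $K_X + (1-b)m_0L$ ample is in fact slightly more careful than the paper's condition $K_X + m_0L$ ample in handling the ampleness hypothesis of Corollary \ref{cone cscK} for $b$ close to $1$.
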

Note that we can find generically a smooth divisor $D \in |mL|$ by taking a sufficiently large $m$ because $L$ is an ample line bundle over $X$.

Finally, we mention the relationship between the invariants $\gamma , \gamma_k , \delta$ and $\delta_k$.
As we said, the inequalities $\delta (L) \geq \gamma(L)$ and $\delta_k (L) \geq \gamma_k(L)$ hold by \cite{Be2,FO}.
However, it is known that the equality $\delta_k (L) = \gamma_k (L)$ doesn't hold in general.
Actually, Fujita \cite[Section 3]{F} and Rubinstein-Tian-Zhang \cite[Corollary 7.2]{RTZ} show that $(X,L) = (\mathbb{P}^1, -K_{\mathbb{P}^1})$ breaks the equality for any {\bf finite} $k$.
However, the following problem which asks the equality in the limit case is still open (even if in the Fano case).
\begin{problem}[\cite{Be1,Be2}]
$$
\delta(L) = \gamma (L) ?
$$
\end{problem}

This paper is organized as follows.
In Section 2, we recall the definitions of energy functionals on the space of {\kah} metrics.
We also recall the variational characterization of constant scalar curvature {\kah} (cone) metrics by the (log) Mabuchi K-energy due to Chen-Cheng \cite{CC2}, Berman-Darvas-Lu \cite{BDL2} and K.Zheng \cite{Zhe}.
In Section 3, we prove Theorem \ref{main theorem}.
The outline of the proof is same as the proof in \cite{Be2}, but we need some modifications in order to deal with the singular case.
We also give the proof of Corollary \ref{large degree}.

\begin{ack}
The author is grateful to the organizers of the Study Sessions for Experts ``Canonical {\kah} metrics on {\kah} manifolds and related topics" for giving an opportunity to study recent developments in {\kah} geometry.
He also would like to thank Yuji Odaka for helpful comments and Yoshinori Hashimoto for suggesting that Corollary \ref{cone cscK} can be applied to Corollary \ref{large degree}.
This work was supported by JSPS KAKENHI (the Grant-in-Aid for Research Activity Start-up) Grant Number JP23K19020.
\end{ack}

%%%%%%PRELIMINARIES%%%%%%%%%%%%%%%%%%%%%%%%%%%%%%%%%%%%%%%%%%%%%%%

\section{Energy functionals and coercivity}

In this section, we recall the definitions of energy functionals on the space of {\kah} metrics.
We also recall the results by Chen-Cheng \cite{CC2}, Berman-Darvas-Lu \cite{BDL2} and K.Zheng \cite{Zhe} which characterize the existence of constant scalar curvature {\kah} (cone) metrics by some variational properties of the (log) Mabuchi K-energy.

Let $X$ be a projective manifold and $L$ be an ample line bundle over $X$.
We fix a Hermitian metric $h$ on $L$ whose curvature form defines a {\kah} metric $\omega$, i.e., $\omega = \sqrt{-1} \Theta (L,h)$.
The symbol $\mathcal{H}(L)$ is the space of smooth {\kah} metrics in $c_1 (L)$, i.e., $\mathcal{H}(L) = \{ \phi \in C^\infty (X, \mathbb{R}) \, | \, \omega_\phi = \omega + \dol \phi > 0\}$.
We write the volume of $(X,\omega)$ as $V$, i.e., $V = \int_X \omega^n$.
We fix a smooth probability measure $dV$ on $X$. 
For $0 \leq f \in L^p (dV)$, we set 
$$
dV_f := fdV .
$$
By scaling, we may assume that $dV_f$ is a probability measure, i.e., $\int_X f dV = 1$.

\begin{definition}

We take a smooth $d$-closed (1,1)-form $\chi$ on $X$ and write $\underline{\chi} = \frac{n \int_X \chi \wedge \omega^{n-1}}{\int_X \omega^{n}}$.
We define the following energy functionals on $\mathcal{H} (L)$.
\begin{itemize}

\item[]  $\displaystyle  E (\phi) := \frac{1}{V (n+1)} \sum_{j=0}^{n} \int_X \phi \omega^j \wedge \omega_{\phi}^{n-j} $ \,\, (Aubin-Mabuchi energy)

\vspace{-5pt}
\item[]  $\displaystyle \mathcal{J}_\chi (\phi) := \frac{1}{V} \sum_{j=1}^{n} \int_X \phi \chi \wedge\omega^{j-1} \wedge \omega_{\phi}^{n-j} - \frac{\underline{\chi}}{V (n+1)} \sum_{j=0}^{n} \int_X \phi \omega^j \wedge \omega_{\phi}^{n-j}$ \,\, ($\mathcal{J}_\chi$-functional)

\vspace{-5pt}
\item[]  $\displaystyle J (\phi) := \frac{1}{V} \int_X \phi \omega^n - E (\phi)  $ \,\, ($J$-functional)

\end{itemize}

\end{definition}
We can easily check that $\mathcal{J}_\chi$ is linear with respect to $\chi$, i.e., for any $a, b \in \mathbb{R}$ and $d$-closed smooth (1,1)-forms $\chi_1, \chi_2$, we have $\mathcal{J}_{a\chi_1 + b \chi_B} = a\mathcal{J}_{\chi_1} + b\mathcal{J}_{\chi_2}$.
(For more details of the $\mathcal{J}_\chi$-functional, we refer \cite{Gao,CS,We} to the readers.)
We need the following elementary lemma.

\begin{lemma}
\label{J1}
$$
J_{\omega} (\phi) =  E (\phi) -  \frac{1}{V} \int_X \phi \omega_\phi^n.
$$
\end{lemma}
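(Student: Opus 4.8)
The plan is to read $J_\omega$ as the $\mathcal{J}_\chi$-functional evaluated at the distinguished choice $\chi = \omega$, and then to reduce the asserted identity to the definition of the Aubin--Mabuchi energy $E$ by a direct, purely combinatorial computation. First I would evaluate the normalizing constant: for $\chi = \omega$ we have
\begin{equation*}
\underline{\omega} = \frac{n\int_X \omega \wedge \omega^{n-1}}{\int_X \omega^n} = \frac{n\int_X \omega^n}{\int_X \omega^n} = n,
\end{equation*}
so the subtracted term in $\mathcal{J}_\omega$ is exactly $\frac{n}{V(n+1)}\sum_{j=0}^{n}\int_X \phi\,\omega^j \wedge \omega_\phi^{n-j}$, which is $n$ times $E(\phi)$.

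Next I would simplify the leading term. Setting $\chi=\omega$ in $\frac{1}{V}\sum_{j=1}^{n}\int_X \phi\,\chi \wedge \omega^{j-1}\wedge \omega_\phi^{n-j}$ and using $\omega \wedge \omega^{j-1}=\omega^{j}$ turns it into $\frac{1}{V}\sum_{j=1}^{n}\int_X \phi\,\omega^{j}\wedge \omega_\phi^{n-j}$. To keep the bookkeeping transparent I would abbreviate $I_j := \int_X \phi\,\omega^{j}\wedge \omega_\phi^{n-j}$ and $S := \sum_{j=0}^{n} I_j$, so that by definition $E(\phi) = S/(V(n+1))$ and $I_0 = \int_X \phi\,\omega_\phi^n$.

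The claim then becomes a one-line manipulation. The key step is to express the leading sum, which omits the $j=0$ term, as $\sum_{j=1}^{n} I_j = S - I_0$, giving
\begin{equation*}
\mathcal{J}_\omega(\phi) = \frac{1}{V}(S - I_0) - \frac{n}{V(n+1)}\,S = \frac{S}{V}\Big(1 - \frac{n}{n+1}\Big) - \frac{I_0}{V} = \frac{S}{V(n+1)} - \frac{I_0}{V}.
\end{equation*}
Identifying $S/(V(n+1)) = E(\phi)$ and $I_0 = \int_X \phi\,\omega_\phi^n$ yields the stated equality $J_\omega(\phi) = E(\phi) - \frac{1}{V}\int_X \phi\,\omega_\phi^n$.

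There is no real obstacle here: the lemma is a formal consequence of the two definitions. The only points demanding mild care are the evaluation $\underline{\omega} = n$ and the mismatch of summation ranges — the $\mathcal{J}_\chi$ sum begins at $j=1$ while the $E$ sum begins at $j=0$ — and it is exactly this single missing term that produces the extra $-\frac{1}{V}\int_X \phi\,\omega_\phi^n$ on the right-hand side.
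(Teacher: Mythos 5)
Your proof is correct and follows essentially the same route as the paper: substitute $\chi=\omega$, compute $\underline{\omega}=n$, and track the missing $j=0$ term in the first sum, which produces exactly the extra $-\frac{1}{V}\int_X \phi\,\omega_\phi^n$. Your abbreviations $I_j$ and $S$ merely repackage the paper's line-by-line manipulation of the same two sums.
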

\begin{proof}
We set $\chi = \omega$.
Since, $\underline{\chi } = n$, we can compute as follows :
\begin{eqnarray*}
\mathcal{J}_{\omega} (\phi) &=& \frac{1}{V} \sum_{j=1}^{n} \int_X \phi \omega \wedge\omega^{j-1} \wedge \omega_{\phi}^{n-j} - \frac{n}{V (n+1)} \sum_{j=0}^{n} \int_X \phi \omega^j \wedge \omega_{\phi}^{n-j}\\
&=& \frac{n+1}{V(n+1)} \sum_{j=1}^{n} \int_X \phi \omega^j \wedge \omega_{\phi}^{n-j} - \frac{n}{V (n+1)} \sum_{j=0}^{n} \int_X \phi \omega^j \wedge \omega_{\phi}^{n-j}\\
&=& \frac{1}{V (n+1)} \sum_{j=0}^{n} \int_X \phi \omega^j \wedge \omega_{\phi}^{n-j} - \frac{1}{V} \int_X \phi \omega_\phi^n \\
&=& E (\phi) -  \frac{1}{V} \int_X \phi \omega_\phi^n.
\end{eqnarray*}
\end{proof}
In addition, the growth of the $\mathcal{J}_\omega$-functional is given by the following lemma.
\begin{lemma}{\rm (\cite[Proposition 22]{CS})}
\label{Jcoer}
There exist constants $A, B >0$ such that
$$
\mathcal{J}_{\omega} (\phi) \geq A J (\phi) - B, \,\,\,\, \forall \phi \in \mathcal{H}(L).
$$
\end{lemma}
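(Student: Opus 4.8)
The plan is to reduce the inequality to the classical comparison between the Aubin $I$- and $J$-functionals. First I would introduce the Aubin functional $I(\phi) := \frac{1}{V}\int_X \phi(\omega^n - \omega_\phi^n)$, which is well known to be nonnegative on $\mathcal{H}(L)$. The key observation is that $\mathcal{J}_\omega$ is exactly the difference $I - J$: by Lemma \ref{J1} we have $\mathcal{J}_{\omega}(\phi) = E(\phi) - \frac{1}{V}\int_X \phi\,\omega_\phi^n$, while $J(\phi) = \frac{1}{V}\int_X\phi\,\omega^n - E(\phi)$ by definition, so adding these two identities the terms involving $E(\phi)$ cancel and we obtain $\mathcal{J}_{\omega}(\phi) + J(\phi) = \frac{1}{V}\int_X\phi(\omega^n-\omega_\phi^n) = I(\phi)$, that is, $\mathcal{J}_{\omega} = I - J$.

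Next I would invoke the classical Aubin inequalities $\frac{1}{n+1}I(\phi) \leq J(\phi) \leq \frac{n}{n+1}I(\phi)$. The right-hand inequality rearranges to $I(\phi) \geq \frac{n+1}{n}J(\phi)$, whence $\mathcal{J}_{\omega}(\phi) = I(\phi) - J(\phi) \geq \frac{1}{n}J(\phi)$. Since $J \geq 0$ on $\mathcal{H}(L)$, this yields $\mathcal{J}_{\omega}(\phi) \geq \frac{1}{n}J(\phi) \geq \frac{1}{n}J(\phi) - B$ for any $B>0$, so the statement holds with $A = 1/n$ and, say, $B = 1$; the additive constant $B$ in the claim is thus mere slack.

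The only nonelementary input is the comparison $J \leq \frac{n}{n+1}I$. I would establish it by differentiating along the linear path $\phi_t = t\phi$, $t \in [0,1]$: using $\frac{d}{dt}E(\phi_t) = \frac{1}{V}\int_X \phi\,\omega_{\phi_t}^n$ one checks that $\frac{d}{dt}J(\phi_t) = \frac{1}{t}I(\phi_t)$, and a parallel computation controls $\frac{d}{dt}I(\phi_t)$. After these reductions the desired bound amounts to showing that the mixed Monge-Amp\`ere expression $\sum_{j=0}^{n}\int_X\phi\,\bigl[(n+1)\,\omega^j\wedge\omega_\phi^{n-j} - n\,\omega_\phi^n - \omega^n\bigr]$ is nonnegative. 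This is the main obstacle, and I would handle it by repeated integration by parts: writing $\omega_\phi - \omega = \dol\phi$ and collecting terms, the expression becomes a sum of integrals of the form $\int_X \sqrt{-1}\,\partial\phi\wedge\overline{\partial}\phi\wedge(\text{semipositive }(n-1,n-1)\text{-form})$, each of which is nonnegative. Since the statement is quoted from \cite[Proposition 22]{CS}, one may alternatively cite that reference directly for this final sign estimate.
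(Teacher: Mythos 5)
Your proof is correct, and it is genuinely different in nature from what the paper does: the paper offers no argument for this lemma at all, quoting it directly from \cite[Proposition 22]{CS}, whereas you give a self-contained elementary proof. Both of your key steps check out. The identity $\mathcal{J}_\omega = I - J$ follows exactly as you say, by adding Lemma \ref{J1} to the definition of $J$ so that the $E(\phi)$ terms cancel; and the comparison $J \leq \frac{n}{n+1}I$ is the classical Aubin inequality. Your final reduction is also the right one: setting $P_m := \int_X \sqrt{-1}\,\partial\phi\wedge\overline{\partial}\phi\wedge\omega^m\wedge\omega_\phi^{n-1-m} \geq 0$, repeated integration by parts gives $I(\phi) = \frac{1}{V}\sum_{m=0}^{n-1}P_m$ and $J(\phi) = \frac{1}{V(n+1)}\sum_{m=0}^{n-1}(m+1)P_m$, and your displayed mixed \mae\ expression evaluates to $(n+1)\sum_{m=0}^{n-1}(n-1-m)P_m \geq 0$, confirming the sign estimate; equivalently, $\mathcal{J}_\omega(\phi) = I(\phi)-J(\phi) = \frac{1}{V(n+1)}\sum_{m=0}^{n-1}(n-m)P_m \geq \frac{1}{n}J(\phi)$ since $n-m \geq \frac{m+1}{n}$ for $0 \leq m \leq n-1$. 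Two remarks. First, your detour along the linear path $\phi_t = t\phi$ is dispensable: the formula $\frac{d}{dt}J(\phi_t) = \frac{1}{t}I(\phi_t)$ is correct but never actually used, since the coefficient expansion above proves the inequality directly, so you could streamline the middle paragraph (the vague step ``a parallel computation controls $\frac{d}{dt}I(\phi_t)$'' is where a referee would push back, and it can simply be deleted). Second, your argument in fact yields the stronger constant-free bound $\mathcal{J}_\omega \geq \frac{1}{n}J$, so the lemma holds with $A = 1/n$ and the additive constant $B$ (as well as the appeal to $J \geq 0$) is, as you note, pure slack. What the paper's citation of \cite[Proposition 22]{CS} buys instead is a statement fitting the general framework of coercivity for $\mathcal{J}_\chi$-functionals used elsewhere (Theorem \ref{special} needs coercivity for general twists $\chi = -{\rm Ric}\,dV + \eta + \gamma\omega$, for which the special identity $\mathcal{J}_\omega = I - J$ is unavailable); what your proof buys, in the special case $\chi = \omega$ that the lemma actually concerns, is transparency and an explicit sharp constant.
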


In other words, the functional $\mathcal{J}_\omega$ is automatically coercive (see Definition \ref{coer} below).
In the proof of Theorem \ref{main theorem}, we will use the following inequality.
\begin{lemma}{\rm (\cite[Lemma 2.1]{Be2})}
\label{jen}
There exists a constant $C>0$ such that
$$
-E (\phi) + \sup_X \phi \leq J(\phi) + C, \,\,\,\, \forall \phi \in \mathcal{H}(L).
$$
\end{lemma}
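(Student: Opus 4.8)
The plan is to reduce the asserted inequality to a standard normalization estimate for $\omega$-plurisubharmonic functions. First I would unwind the functionals: directly from the definition of the $J$-functional one has
$$
J(\phi) + E(\phi) = \frac{1}{V}\int_X \phi\,\omega^n ,
$$
so that $-E(\phi) + \sup_X \phi \leq J(\phi) + C$ is equivalent to
$$
\sup_X \phi \leq \frac{1}{V}\int_X \phi\,\omega^n + C , \qquad \forall\, \phi \in \mathcal{H}(L).
$$
Thus everything reduces to bounding the supremum of an $\omega$-psh function by its $\omega^n$-average, uniformly in $\phi$.

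Next I would use translation invariance. Both $\sup_X \phi$ and $\tfrac{1}{V}\int_X \phi\,\omega^n$ increase by $c$ under $\phi \mapsto \phi + c$, so the displayed inequality is insensitive to adding constants, and I may normalize $\sup_X \phi = 0$. Since $\phi \in \mathcal{H}(L)$ means $\omega_\phi = \omega + \dol\phi > 0$, the function $\phi$ is a smooth $\omega$-plurisubharmonic function; with $\sup_X \phi = 0$ we have $\phi \leq 0$, hence $\tfrac{1}{V}\int_X \phi\,\omega^n \leq 0$ for free. The whole content of the lemma is therefore the matching lower bound: I must produce a constant $C = C(X,\omega)$, independent of $\phi$, with $\tfrac{1}{V}\int_X \phi\,\omega^n \geq -C$.

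This uniform lower bound is precisely the assertion that the set $\{ \psi \in \mathrm{PSH}(X,\omega) : \sup_X \psi = 0 \}$ is bounded in $L^1(\omega^n)$, which I would deduce from the classical compactness theorem in pluripotential theory (H\"{o}rmander; see Guedj--Zeriahi): this family is compact in $L^1(X)$, so in particular $\psi \mapsto \int_X \psi\,\omega^n$ is bounded below on it, say by $-CV$, which is exactly what is needed. For a self-contained alternative I would instead use the Green function $G(x,y)$ of $(X,\omega)$ normalized by $\int_X G(x,\cdot)\,\omega^n = 0$: combining the Green representation formula with the lower boundedness of $G$, with $\int_X \Delta_\omega \phi\,\omega^n = 0$, and with $\Delta_\omega \phi \geq -n$ (the trace form of $\omega_\phi \geq 0$) yields the pointwise bound $\phi(x) - \tfrac{1}{V}\int_X \phi\,\omega^n \leq C$, and hence the same bound for $\sup_X \phi$. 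Since the two reduction steps are purely formal, this uniform $L^1$-estimate is the single genuine input and the main (indeed only) obstacle; it is, however, a classical fact about normalized $\omega$-psh functions and introduces no new difficulty here.
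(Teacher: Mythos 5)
Your proof is correct, and it is essentially the standard argument: the paper itself gives no proof (it cites \cite[Lemma 2.1]{Be2}), and the content there is exactly your reduction, via $J(\phi)+E(\phi)=\frac{1}{V}\int_X \phi\,\omega^n$, to the classical submean-value estimate $\sup_X\phi\leq \frac{1}{V}\int_X\phi\,\omega^n + C$ for $\omega$-plurisubharmonic functions, which you correctly justify both by $L^1$-compactness of sup-normalized $\omega$-psh functions and by the Green function bound using $\Delta_\omega\phi\geq -n$. No gaps; either of your two justifications of the key estimate suffices.
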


The following functionals play a fundamental role in this paper.

\begin{definition}[\cite{Ma,Zhe}]

Fix $\gamma \in \mathbb{R}$.
We define the following functionals on $\mathcal{H}(L)$.
\begin{itemize}

\item[]  $\displaystyle   {\rm Ent}_{dV_f}(V^{-1}\omega_\phi^n) := \int_X \log \left( \frac{V^{-1}\omega_\phi^n}{dV_f} \right)  V^{-1}\omega_\phi^n$ \,\, (relative entropy)

\vspace{-5pt}
\item[]  $\displaystyle \mathcal{M}_{f, \eta} (\phi) := {\rm Ent}_{dV_f}(V^{-1}\omega_\phi^n) + \mathcal{J}_{-{\rm Ric}dV + \eta} (\phi) $ \,\, (Mabuchi K-energy for $f$ and $\eta$)

\vspace{-5pt}
\item[]  $\displaystyle \mathcal{D}_{-\gamma, f} (\phi) := - E (\phi) - \frac{1}{\gamma} \log \int_X e^{-\gamma \phi} dV_f $ \,\, (twisted Ding functional)

\end{itemize}
\end{definition}

\begin{rem}
The case when $f=1$ and $\eta=0$ corresponds to the smooth case since the Mabuchi K-energy for $1$ and $0$ is exactly equal to the Chen-Tian formula of the Mabuchi functional $\mathcal{M}$ (see \cite{Chen1,Ma,Ti2}) :
\begin{equation*}
\mathcal{M} (\phi) = {\rm Ent}_{dV} (V^{-1} \omega_\phi) + J_{-{\rm Ric}\, dV} (\phi).
\end{equation*}
In this case, a critical point of $\mathcal{M}$ is a constant scalar curvature {\kah} metric.

In addition, as we said in Introduction, the case when $f_b = |s_D|^{-2(1-b)}$ and $\eta_b = (1-b) \sqrt{-1} \Theta (\mathscr{O} (D), | \cdot |)$ corresponds to the conic case (see the log Mabuchi K-energy in \cite{Zhe}).
\end{rem}

It is well-known that the relative entropy satisfies the following
\begin{equation}
\label{ent legendre}
{\rm Ent}_\mu (\nu) = \sup_{a \in C^0 (X)} \left( \int_X a \nu - \log \int_X e^a \mu \right)
\end{equation}
for any two probability measures $\mu, \nu$ (see \cite[Proposition 2.10]{BBEGZ}).
In other words, the relative entropy is the Legendre transformation of the functional $\phi \to \log \int_X e^\phi$.

In order to compare the Mabuchi K-energy and the twisted Ding functional, we show the following lemma which is a generalization of the inequality in \cite[Theorem 3.4]{Be1}.
\begin{prop}
\label{Mabuchi and Ding}
For $\gamma \in \mathbb{R}$, we have
$$
\mathcal{M}_{f,\eta}(\phi) \geq \gamma \mathcal{D}_{-\gamma,f} (\phi) + J_{-{\rm Ric} \, dV + \eta + \gamma \omega } (\phi).
$$
\end{prop}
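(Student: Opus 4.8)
The plan is to expand all three functionals from their definitions and to reduce the asserted inequality to the variational (Legendre) characterization of the relative entropy recorded in \eqref{ent legendre}. First I would write out the right-hand side: by definition $\gamma\,\mathcal{D}_{-\gamma,f}(\phi) = -\gamma E(\phi) - \log \int_X e^{-\gamma\phi}\,dV_f$, and since $\mathcal{J}_\chi$ is linear in $\chi$ I would split the last term as
\[
\mathcal{J}_{-{\rm Ric}\,dV + \eta + \gamma\omega}(\phi) = \mathcal{J}_{-{\rm Ric}\,dV + \eta}(\phi) + \gamma\,\mathcal{J}_\omega(\phi).
\]
The summand $\mathcal{J}_{-{\rm Ric}\,dV+\eta}(\phi)$ occurs identically inside $\mathcal{M}_{f,\eta}(\phi)$ on the left, so it cancels, and the claim reduces to the scalar inequality
\[
{\rm Ent}_{dV_f}(V^{-1}\omega_\phi^n) \;\geq\; -\gamma E(\phi) + \gamma\,\mathcal{J}_\omega(\phi) - \log\int_X e^{-\gamma\phi}\,dV_f.
\]

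Next I would eliminate the Aubin--Mabuchi energy. Applying Lemma \ref{J1} gives $\gamma\,\mathcal{J}_\omega(\phi) = \gamma E(\phi) - \frac{\gamma}{V}\int_X \phi\,\omega_\phi^n$, and the $\gamma E(\phi)$ produced here cancels the $-\gamma E(\phi)$ coming from $\gamma\,\mathcal{D}_{-\gamma,f}(\phi)$. Thus the whole statement collapses to
\[
{\rm Ent}_{dV_f}(V^{-1}\omega_\phi^n) \;\geq\; \int_X (-\gamma\phi)\,\frac{\omega_\phi^n}{V} - \log\int_X e^{-\gamma\phi}\,dV_f,
\]
where I have rewritten $-\frac{\gamma}{V}\int_X \phi\,\omega_\phi^n = \int_X(-\gamma\phi)\,V^{-1}\omega_\phi^n$ in order to expose the structure of the right-hand side.

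Finally I would recognize the right-hand side as the value at the test function $a = -\gamma\phi$ of the functional $a \mapsto \int_X a\,\nu - \log\int_X e^a\,\mu$ with $\mu = dV_f$ and $\nu = V^{-1}\omega_\phi^n$. Both are genuine probability measures ($\int_X f\,dV = 1$ by the normalization, and $\int_X \omega_\phi^n = V$ since the cohomology class is fixed), and $-\gamma\phi \in C^0(X)$ because $\phi$ is smooth; hence the supremum formula \eqref{ent legendre}, applied to this single competitor, yields exactly the required bound. I do not expect a genuine obstacle here: the content is bookkeeping together with one invocation of Legendre duality for the entropy, and the only point demanding care is the correct cancellation of the two $\mathcal{J}$-type terms and of the two $E$-terms, which is precisely what converts an inequality between energy functionals into the familiar entropy estimate. (Throughout I read the subscripted $J_{-{\rm Ric}\,dV+\eta+\gamma\omega}$ in the statement as the $\mathcal{J}_\chi$-functional, consistent with the usage $J_\omega = \mathcal{J}_\omega$ in Lemma \ref{J1}.)
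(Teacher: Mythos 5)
Your proposal is correct and is essentially the paper's own proof run in reverse: the paper starts from the Legendre characterization \eqref{ent legendre} with the same competitor $a=-\gamma\phi$ and then reassembles the right-hand side using Lemma \ref{J1} and the linearity of $\mathcal{J}_\chi$, which are exactly your two cancellation steps. The reading of $J_{-{\rm Ric}\,dV+\eta+\gamma\omega}$ as the $\mathcal{J}_\chi$-functional also matches the paper's usage.
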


\begin{proof}

We can compute as follows by choosing $a = - \gamma \phi $ in the equation (\ref{ent legendre}).
\begin{eqnarray*}
\mathcal{M}_{f , \eta}(\phi)
&\geq& - \log \int_X e^{-\gamma \phi} dV_f - \gamma \int_X \phi V^{-1} \omega_\phi^n + J_{-{\rm Ric}\,dV + \eta } (\phi) \\
&=& \gamma \mathcal{D}_{-\gamma, f} (\phi) + \gamma \left( E(\phi) - \frac{1}{V} \int_X \phi \omega_\phi^n \right) + J_{-{\rm Ric}\,dV + \eta} (\phi)\\
&=& \gamma \mathcal{D}_{-\gamma, f} (\phi) + \gamma J_{\omega} (\phi) + J_{-{\rm Ric} \, dV + \eta } (\phi)\\
&=& \gamma \mathcal{D}_{-\gamma, f} (\phi) + J_{-{\rm Ric} \, dV + \eta + \gamma \omega } (\phi).
\end{eqnarray*}
Here we have used Lemma \ref{J1}.
\end{proof}

We recall the following asymptotic property of functionals, which plays a very important role on the study of canonical {\kah} metrics.
\begin{definition}
\label{coer}
A functional $F : \mathcal{H} (L) \to \mathbb{R}$ is said to be coercive if there exist constants $C_1, C_2 > 0$ such that
$$
F (\phi) \geq C_1 J (\phi) -C_2 , \,\,\,\, \forall \phi \in \mathcal{H} (X ,\omega).
$$
\end{definition}

We recall the following result which shows that the coercivity of the Mabuchi K-energy characterize the existence of constant scalar curvature {\kah} metrics.
\begin{thm}[\cite{BDL2,CC2}]
There exists a unique constant scalar curvature {\kah} metric in $c_1(L)$ if and only if the Mabuchi K-energy is coercive.
\end{thm}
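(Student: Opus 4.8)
The plan is to prove the two implications of the equivalence separately, treating the existence direction as a variational minimization followed by a deep interior regularity step, and the uniqueness together with the reverse implication via geodesic convexity. Throughout I work with $\mathcal{M} = \mathcal{M}_{1,0}$, whose smooth critical points are exactly the cscK metrics (solutions of $S(\omega_\phi) = \underline{S}$), and I extend the functionals from $\mathcal{H}(L)$ to the finite-energy completion $(\mathcal{E}^1, d_1)$. On this space the Aubin-Mabuchi energy $E$, the functional $J$, and $J_{-{\rm Ric}\, dV}$ extend continuously, while the relative entropy ${\rm Ent}_{dV}$ extends as a lower semicontinuous functional; hence $\mathcal{M}$ is $d_1$-lower semicontinuous on $\mathcal{E}^1$.

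For the direction ``coercive $\Rightarrow$ existence'', I would take a minimizing sequence $\{\phi_j\}$ for $\mathcal{M}$, normalized by $E(\phi_j) = 0$. Since $\mathcal{M}(\phi_j)$ is bounded above and coercivity gives $\mathcal{M}(\phi_j) \geq C_1 J(\phi_j) - C_2$, the quantities $J(\phi_j)$, and hence the $d_1$-distances to a fixed base point, are uniformly bounded; because $\mathcal{M} = {\rm Ent}_{dV}(V^{-1}\omega_{\phi_j}^n) + J_{-{\rm Ric}\, dV}(\phi_j)$ with $J_{-{\rm Ric}\, dV}$ controlled by $J$, the entropies are bounded as well. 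Compactness of $d_1$-bounded sets of uniformly bounded entropy then yields a $d_1$-limit $u \in \mathcal{E}^1$ with finite entropy, and lower semicontinuity makes $u$ a global minimizer of $\mathcal{M}$.

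The crux, and the step I expect to be by far the main obstacle, is the regularity: showing that the finite-energy minimizer $u$ is a \emph{smooth} {\kah} potential solving the cscK equation. The Euler-Lagrange equation is the fourth-order cscK system, which I would decouple into the complex {\ma} equation $\omega_u^n = e^{F}\,dV$ together with the second-order equation $\Delta_{\omega_u} F = -\underline{S} + {\rm tr}_{\omega_u}{\rm Ric}\, dV$ for the density $F$. Since the minimizer has finite entropy, one has an a priori entropy bound along a smooth approximating sequence, and the heart of the matter is to invoke the Chen-Cheng a priori estimates \cite{CC2}: from the entropy bound one extracts a $C^0$-bound on $F$, then a Laplacian and $W^{2,p}$ estimate on the potential, and finally higher-order Schauder bounds by bootstrapping, giving uniform $C^\infty$ control and hence smoothness of $u$. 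This is where essentially all of the analytic difficulty lies, because the fourth-order structure precludes a direct maximum principle and the estimates must be produced through delicate integral inequalities and auxiliary functions.

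For uniqueness and for the reverse implication ``existence $\Rightarrow$ coercive'', I would use that $\mathcal{M}$ is convex along the finite-energy geodesics of $\mathcal{E}^1$. Two cscK metrics are both minimizers of $\mathcal{M}$; joining them by a geodesic, convexity forces the geodesic to be $\mathcal{M}$-affine, and in the absence of nontrivial holomorphic vector fields this makes the endpoints coincide. Since coercivity of $\mathcal{M}$ is itself incompatible with a positive-dimensional action of ${\rm Aut}^0(X,L)$ (along such an orbit $J$ would be bounded while $\mathcal{M}$ is constant), the automorphism group is discrete and uniqueness holds on the nose. For ``existence $\Rightarrow$ coercive'' I would argue by contradiction via the Berman-Darvas-Lu compactness principle \cite{BDL2}: if $\mathcal{M}$ failed to be coercive there would be a sequence with $J(\phi_j) \to \infty$ but $\mathcal{M}(\phi_j)$ bounded above, and a slope and lower-semicontinuity analysis of $\mathcal{M}$ along the associated geodesic rays from the cscK metric would produce a minimizer distinct from it, contradicting the uniqueness just established.
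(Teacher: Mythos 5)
The paper gives no proof of this theorem: it is imported as a black box from \cite{CC2,BDL2}, so there is no internal argument to compare against. Your outline faithfully reproduces the strategy of those cited works --- $d_1$-compactness of an entropy-bounded minimizing sequence in $\mathcal{E}^1$ with lower semicontinuity of $\mathcal{M}$, regularity of the weak minimizer via the Chen--Cheng a priori estimates, uniqueness from Berman--Berndtsson convexity along finite-energy geodesics, and the Darvas--Rubinstein/Berman--Darvas--Lu properness argument for the converse --- so it is a correct sketch taking essentially the same route as the sources the paper cites.
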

The conic version of the result above is also given.
\begin{thm}[\cite{Zhe}]
Fix $b \in (0,1]$ and a smooth divisor $D$ on $X$.
There exists a unique constant scalar curvature {\kah} cone metric in with cone angle $2 \pi b$ if and only if the log Mabuchi K-energy for cone angle $2\pi b$ is coercive.
\end{thm}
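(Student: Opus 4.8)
The plan is to prove the conic statement as the exact analogue of the Chen--Cheng and Berman--Darvas--Lu characterization recalled just above, establishing the two implications separately and handling uniqueness through geodesic convexity. Throughout I would work in the metric completion of $\mathcal{H}(L)$ adapted to the cone setting, i.e. the space of finite-energy potentials $\mathcal{E}^1$ equipped with the Darvas distance $d_1$, restricted to (or suitably compatible with) the potentials whose currents carry the prescribed cone singularities along $D$. The first step is to extend the log Mabuchi K-energy $\mathcal{M}_{f_b,\eta_b}$ to this space by lower semicontinuous regularization. The structural input I would rely on is that this extended functional is convex along the finite-energy geodesics of Darvas and $d_1$-lower semicontinuous: the entropy term ${\rm Ent}_{dV_{f_b}}$ is lower semicontinuous via its Legendre description (\ref{ent legendre}), while the $\mathcal{J}_\chi$-term with $\chi = -{\rm Ric}\,dV + \eta_b$ is continuous and affine along geodesics.

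For the direction coercivity $\Rightarrow$ existence, I would run the direct method of the calculus of variations. Coercivity means $\mathcal{M}_{f_b,\eta_b}(\phi) \geq C_1 J(\phi) - C_2$, so any minimizing sequence has bounded $J$ and is therefore $d_1$-bounded; by completeness of $\mathcal{E}^1$ and lower semicontinuity a minimizer $u \in \mathcal{E}^1$ exists. One then checks that $u$ is a weak solution of the conic cscK equation, and the crucial point is to upgrade this weak minimizer to a genuine constant scalar curvature {\kah} cone metric of angle $2\pi b$ along $D$. This is the regularity step: I would set up a conic continuity path and apply the interior Schauder and Evans--Krylov estimates in Donaldson's cone Hölder spaces $C^{2,\alpha,\beta}$, combined with entropy and Sobolev bounds for the minimizer, to conclude that $u$ is smooth away from $D$ and has the expected model cone behaviour transverse to $D$.

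For the converse, existence $\Rightarrow$ coercivity, I would combine the geodesic convexity with a Darvas--Rubinstein type principle. A constant scalar curvature {\kah} cone metric is a critical point of the geodesically convex functional $\mathcal{M}_{f_b,\eta_b}$; when the group of automorphisms preserving $D$ is discrete, criticality forces properness, hence coercivity in the sense of Definition \ref{coer}, and the reductivity of the automorphism group in the presence of such a metric is used to exclude flat directions of $\mathcal{M}_{f_b,\eta_b}$. Uniqueness then follows from the same convexity: any two such metrics are joined by a finite-energy geodesic along which the functional is affine, and the strict convexity (away from holomorphic vector fields tangent to $D$) forces the potentials to agree, with coercivity guaranteeing there is no nontrivial automorphism producing a family of solutions.

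The hard part will be the regularity step in the existence direction. Producing the conic a priori estimates — controlling the complex Hessian and higher derivatives of the minimizer uniformly up to the cone divisor, and pinning down the precise $C^{2,\alpha,\beta}$ cone asymptotics along $D$ — is the genuine analytic content and is exactly the conic counterpart of the Chen--Cheng scalar curvature estimates, forming the technical core of K.Zheng's work. By contrast, the variational packaging in $\mathcal{E}^1$ and the convexity arguments for both the converse and uniqueness are comparatively formal once the smooth theory and Darvas' metric geometry of the space of potentials are available.
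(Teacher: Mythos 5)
The paper does not prove this theorem: it is quoted directly from \cite{Zhe} as the conic counterpart of the Chen--Cheng/Berman--Darvas--Lu characterization stated just above it, so there is no internal proof to compare against. Measured against the cited source, your outline is a faithful reconstruction of the actual strategy --- the $\mathcal{E}^1$/$d_1$ variational framework of \cite{Da}, the direct method plus regularity of weak minimizers for the direction coercivity $\Rightarrow$ existence (the conic analogue of \cite{BDL2,CC2}), and geodesic convexity for the converse and for uniqueness --- and you correctly identify the conic a priori estimates as the technical core, which is indeed where the substance of K.\ Zheng's work lies.

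Two points in your sketch nevertheless need repair. First, your structural claim that the $\mathcal{J}_\chi$-term is affine along geodesics is false: only the Aubin--Mabuchi energy $E$ is affine along weak geodesics, while $\mathcal{J}_\chi$ is merely convex when $\chi \geq 0$, and the relevant $\chi = -{\rm Ric}\, dV + \eta_b$ carries no sign in general. Consequently, convexity of the log Mabuchi K-energy cannot be assembled term by term from ``entropy lower semicontinuous $+$ affine energy''; the entropy term alone is not known to be convex along geodesics, and what one actually needs is the Berman--Berndtsson subharmonicity theorem for the full K-energy, extended to the finite-energy setting as in \cite{BDL1} and to the log/conic setting in \cite{Zhe}. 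Since both your converse direction and your uniqueness argument run on this convexity, this is a genuine gap in the variational half of the sketch, though one repaired by citing the correct convexity theorem rather than by a new idea. Second, the statement as recorded pairs \emph{absolute} uniqueness with coercivity in the sense of Definition \ref{coer}; your caveat ``when the group of automorphisms preserving $D$ is discrete'' cannot be added as a hypothesis but must be absorbed into the equivalence (in \cite{Zhe} this is handled through the treatment of holomorphic vector fields tangent to $D$, since full coercivity already excludes nontrivial continuous automorphisms). With these corrections, your proposal is a correct roadmap to the cited proof, with the analytic core (the conic Chen--Cheng-type estimates and the $C^{2,\alpha,\beta}$ cone regularity of the minimizer) deferred to \cite{Zhe} exactly as the paper itself defers it.
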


\begin{rem}
In \cite{BDL2,CC2,Zhe}, they consider the metric completion of $\mathcal{H} (L)$ which is called the finite energy space $\mathcal{E}^1 (X,\omega)$ (\cite{GZ}, see also \cite[Section 3]{Da}) on the study of (log) Mabuchi K-energy.
(It is known that the space of {\kah} cone metrics is included in $\mathcal{E}^1 (X,\omega)$.)
The corresponding metric on $\mathcal{H} (L)$ is $d_1$-metric whose growth is compatible with $J$-functional (see \cite{Da}).
Note that the coercivity on $\mathcal{E}^1 (X,\omega)$ is equivalent to the coercivity on $\mathcal{H}(L)$ by applying the regularization of the relative entropy \cite[Lemma 3.1]{BDL1} and the fact that the functionals $\mathcal{J}_\chi$ and $ J$ are continuous with respect to $d_1$-metric \cite[Lemma 5.23]{DR}.
\end{rem}

%%%%%%PROOF%%%%%%%%%%%%%%%%%%%%%%%%%%%%%%%%%%%%%%%%%%%%%%

\section{Proof}

In this section, we prove Theorem \ref{main theorem}.
The proof in this paper is similar to the proof in \cite{Be2}, but we need some technical modifications in order to deal with singular density.

For $k \in \mathbb{Z}_{>0}$, we fix a basis $s_{i}^{(k)}, (i =1,2,...,N)$ of $H^0 (X , L^k)$, where $N = N_k := \dim H^0 (X , L^k) $.
For $\phi \in \mathcal{H}(L)$ and a probability measure $\mu$, we define the Hermitian matrix $H^{(k)} (\phi, \mu)$ by
$$
H^{(k)} (\phi, \mu) := \left( \int_{X} \left( s_{i}^{(k)}, s_{j}^{(k)}\right)_{(he^{-\phi})^k} \mu \right)_{ij}.
$$
We set
$$
E_{k } (\phi) := - \frac{1}{kN} \log \det H^{(k)} (\phi, dV).
$$
\begin{definition}
The $k$-th approximate Ding functional is defined by
$$
\mathcal{D}_{k, -\gamma, f} (\phi) := - E_{k} (\phi) - \frac{1}{\gamma} \log \int_X e^{-\gamma \phi} dV_f.
$$
\end{definition}

\begin{rem}
Note that the functional $E_k$ doesn't depend on the parameter $\gamma$ since we consider the fixed smooth probability measure $dV$.
So, the $k$-th approximate Ding functional $\mathcal{D}_{k, -\gamma, f} $ above is different from the approximate Ding functional in \cite{Be2}. 
\end{rem}

Firstly, we show the following proposition which is a modification of Proposition 2.3 in \cite{Be2}.
\begin{prop}
\label{approx Ding}
Fix $\tau > 0$ and $\gamma > 0$.
We take a positive integer $k$ such that $f \in L^{k\tau / (k\tau-\gamma(1+\tau))} (dV)$ and $k\tau/\gamma (1+\tau)  > 1$ .
Then, for any $\phi \in \mathcal{H}(L)$, we have
\begin{eqnarray*}
&& \hspace{-40pt} -\frac{1}{\gamma(1+\tau) N} \log \mathcal{Z}_{N, f} (-\gamma(1+\tau) )\\
&& \leq \mathcal{D}_{k,-\gamma,f} (\phi ) + \frac{1}{k} \log N + \frac{k\tau-\gamma(1+\tau)}{\gamma k (1+\tau)} \log \int_{X} f^{k\tau / (k\tau-\gamma(1+\tau))}  dV. 
\end{eqnarray*}
\end{prop}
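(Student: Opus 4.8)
The plan is to establish a pointwise (in $\phi$) lower bound for $\mathcal{Z}_{N,f}(-\gamma(1+\tau))$ and then apply $-\tfrac{1}{\gamma(1+\tau)N}\log(\cdot)$. First I would move the weight $\phi$ into the metric: from $|\det S^{(k)}|^2_{(he^{-\phi})^k} = |\det S^{(k)}|^2_{h^k}\,e^{-k\sum_j\phi(x_j)}$ one gets
\[
\mathcal{Z}_{N,f}(-\gamma(1+\tau)) = \int_{X^N}|\det S^{(k)}|^{-2\gamma(1+\tau)/k}_{(he^{-\phi})^k}\,e^{-\gamma(1+\tau)\sum_j\phi(x_j)}\,dV_f^{\otimes N}.
\]
Then I would normalise the Slater determinant by the Gram (Andr\'eief) identity $\int_{X^N}|\det S^{(k)}|^2_{(he^{-\phi})^k}\,dV^{\otimes N} = N!\,\det H^{(k)}(\phi,dV) = N!\,e^{-kNE_k(\phi)}$, so that $\rho := |\det S^{(k)}|^2_{(he^{-\phi})^k}/\big(N!\det H^{(k)}(\phi,dV)\big)$ is a probability density with respect to $dV^{\otimes N}$. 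Factoring it out yields $\mathcal{Z}_{N,f}(-\gamma(1+\tau)) = \big(N!\det H^{(k)}(\phi,dV)\big)^{-\gamma(1+\tau)/k}J$, where $J := \int_{X^N}\rho^{-\gamma(1+\tau)/k}e^{-\gamma(1+\tau)\sum_j\phi}\,dV_f^{\otimes N}$; the elementary bound $N!\le N^N$ converts the prefactor into the $\tfrac1k\log N$ term, while $\det H^{(k)}$ becomes $-E_k(\phi)$.

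The heart of the argument is a single application of the multiplicative H\"older inequality, applied not to $J$ but to the \emph{Ding integral}. With $U := \int_X e^{-\gamma\phi}\,dV_f$ one has $U^N = \int_{X^N}\prod_j\big(e^{-\gamma\phi(x_j)}f(x_j)\big)\,dV^{\otimes N}$, and I would write this integrand as $G_1^{\theta_1}G_2^{\theta_2}G_3^{\theta_3}$ with $G_1$ the integrand of $J$ against $dV^{\otimes N}$ (namely $\rho^{-\gamma(1+\tau)/k}e^{-\gamma(1+\tau)\sum_j\phi}\prod_jf$), $G_2 = \rho$, and $G_3 = \big(\prod_jf(x_j)\big)^{p}$. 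Matching the total powers of $\rho$, of $\phi$ and of $f$ forces $\theta_1 = 1/(1+\tau)$, $\theta_2 = \tfrac{\gamma(1+\tau)}{k}\theta_1$ and $\theta_3 = \tau/\big((1+\tau)p\big)$, and the compatibility condition $\theta_1+\theta_2+\theta_3 = 1$ needed for the inequality $\int\prod_iG_i^{\theta_i}\,dV^{\otimes N}\le\prod_i\big(\int_{X^N}G_i\,dV^{\otimes N}\big)^{\theta_i}$ pins down exactly $p = k\tau/(k\tau-\gamma(1+\tau))$ --- precisely the integrability exponent in the hypothesis. Since $\int\rho\,dV^{\otimes N}=1$ and $\int(\prod_jf)^p\,dV^{\otimes N} = \big(\int_Xf^p\,dV\big)^N$, H\"older reads $U^N\le J^{\theta_1}\big(\int_Xf^p\,dV\big)^{N\theta_3}$, and solving for $J$ gives $J\ge U^{(1+\tau)N}\big(\int_Xf^p\,dV\big)^{-N\tau/p}$.

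Combining the two steps produces $\mathcal{Z}_{N,f}(-\gamma(1+\tau))\ge N^{-\gamma(1+\tau)N/k}\big(\det H^{(k)}(\phi,dV)\big)^{-\gamma(1+\tau)/k}U^{(1+\tau)N}\big(\int_Xf^p\,dV\big)^{-N\tau/p}$. Taking $-\tfrac{1}{\gamma(1+\tau)N}\log(\cdot)$ and using $\tau/p = (k\tau-\gamma(1+\tau))/k$ then reproduces the four terms on the right-hand side: $-E_k(\phi)$ and $-\tfrac1\gamma\log\int_Xe^{-\gamma\phi}dV_f$ (together $\mathcal{D}_{k,-\gamma,f}(\phi)$), the term $\tfrac1k\log N$, and the error $\tfrac{k\tau-\gamma(1+\tau)}{\gamma k(1+\tau)}\log\int_Xf^p\,dV$.

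The step I expect to require the most care is the choice of the three-way H\"older split. The naive instinct --- to bound $J$ (equivalently $\mathcal{Z}$) from below by a reverse H\"older inequality peeling off the negative power $\rho^{-\gamma(1+\tau)/k}$ --- is both lossy and, as a short computation with $f\equiv 1$ shows, sends the inequality the wrong way. The correct move is to bound the Ding integral $U^N$ from above by an ordinary convex-combination H\"older inequality and only then solve for $J$; the requirement that the three weights $\theta_i\ge0$ sum to one is exactly what selects the exponent $p$, so the bookkeeping of the powers of $\rho$, $e^{-\phi}$ and $f$ must be carried out precisely. This is also where the singular density $f\in L^p(dV)$ is accommodated, with $\tau>0$ measuring the gap between the inverse temperature $\gamma(1+\tau)$ and the target weight $\gamma$; the hypothesis $k\tau/\gamma(1+\tau)>1$ guarantees $p>1$ and $\theta_i\ge0$. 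The remaining ingredients --- the Gram identity, $N!\le N^N$, and $\int\rho\,dV^{\otimes N}=1$ --- are elementary and make all non-H\"older factors either explicit or trivial.
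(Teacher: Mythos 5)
Your proof is correct and is essentially the paper's own argument repackaged: your single three-factor H\"older inequality with weights $\theta_1,\theta_2,\theta_3$ is exactly the composition of the paper's two successive H\"older applications (first with exponents $1+\tau$ and $(1+\tau)/\tau$ against the normalized measure $(e^{-\gamma\phi}dV_f)^{\otimes N}$, then with exponent $k\tau/\gamma(1+\tau)$ to peel $\prod_j f(x_j)$ off the positive-power determinant integral), and both proofs rest on the same ingredients --- the Berman--Boucksom identity $\left\Vert \det S^{(k)} \right\Vert^{2}_{L^2} = N!\det H^{(k)}(\phi,dV)$, Fubini for the product density, and $N!\le N^N$ (Stirling in the paper). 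The only organizational difference is that by bounding $U^N=\left(\int_X e^{-\gamma\phi}\,dV_f\right)^N$ instead of $1$, you absorb the paper's separate normalization step $\phi\mapsto\phi+\gamma^{-1}\log\int_X e^{-\gamma\phi}\,dV_f$ into the H\"older inequality itself.
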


\begin{proof}
Fix $\phi \in \mathcal{H} (L)$.
Then, we can compute as follows :
\begin{eqnarray*}
\mathcal{Z}_{N, f} (-\gamma)
&=& \int_{X^N} \left| \det S^{(k)} \right|^{-2\gamma/k}_{h^k} dV_f^{\otimes N}\\
&=& \int_{X^N} \left| \det S^{(k)} \right|^{-2\gamma/k}_{(h e^{-  \phi})^k } (e^{-\gamma \phi} dV_f)^{\otimes N}.
\end{eqnarray*}

Firstly, we assume that $e^{-\gamma \phi} dV_f$ is a probability measure.
Then, the {\hol} inequality implies that
\begin{eqnarray*}
1&=& \int_{X^N}  (e^{-\gamma \phi} dV_f)^{\otimes N} \\
&=& \int_{X^N}  \left( \left| \det S^{(k)} \right|^{-2\gamma/k}_{(h e^{-  \phi})^k } \prod^N e^{-\tau\gamma \phi/(1 + \tau)}  \right) \left( \left|  \det S^{(k)} \right|^{2\gamma/k}_{(h e^{-  \phi})^k }  \prod^N e^{\tau\gamma \phi/(1 + \tau)} \right) (e^{-\gamma \phi} dV_f)^{\otimes N}  \\
&\leq& \left( \int_{X^N} \left| \det S^{(k)} \right|^{-2\gamma (1+\tau)/k}_{(h e^{-  \phi})^k }  \prod^N e^{-\tau\gamma \phi}  (e^{-\gamma \phi} dV_f)^{\otimes N} \right)^{1 /(1+\tau)} \\
&& \hspace{50pt}\times \left( \int_{X^N} \left| \det S^{(k)} \right|^{2\gamma (1 + \tau)/k\tau}_{(h e^{-  \phi})^k } \prod^N e^{\gamma \phi}  (e^{-\gamma \phi} dV_f)^{\otimes N} \right)^{\tau /(1 + \tau)}\\
&\leq& \left( \int_{X^N} \left| \det S^{(k)} \right|^{-2\gamma (1+\tau)/k}_{(h e^{-  \phi})^k }  (e^{-(1+\tau)\gamma \phi} dV_f)^{\otimes N} \right)^{1 /(1+\tau)} \hspace{-17pt} \times \left( \int_{X^N} \left| \det S^{(k)} \right|^{2\gamma (1 + \tau)/k\tau}_{(h e^{-  \phi})^k } dV_f^{\otimes N} \right)^{\tau /(1 + \tau)}.
\end{eqnarray*}
Note that the first term in the last inequality is $\mathcal{Z}_{N,f} (-\gamma(1+\tau))^{1 /(1+\tau)}$.
For simplicity, we write the product of the density functions as
$$
F(x_1 ,..., x_N) := \prod_i f(x_i) \in L^p (dV^{\otimes N}).
$$

Recall that $dV$ is a (fixed) smooth probability measure.
By using the {\hol} inequality again and the assumption that $k\tau/\gamma (1+\tau)  > 1$, we have
\begin{eqnarray*}
&&\hspace{-10pt} \int_{X^N} \left| \det S^{(k)} \right|^{2\gamma (1 + \tau)/k\tau}_{(h e^{-  \phi})^k } dV_f^{\otimes N}\\
&&= \int_{X^N} \left| \det S^{(k)} \right|^{2\gamma (1 + \tau)/k\tau}_{(h e^{-  \phi})^k } F dV^{\otimes N}\\
&&\leq \left( \int_{X^N} \left| \det S^{(k)} \right|^{2}_{(h e^{-  \phi})^k } dV^{\otimes N} \right)^{\gamma (1 + \tau) /k\tau} \left( \int_{X^N} F^{k\tau / (k\tau-\gamma(1+\tau))} dV^{\otimes N} \right)^{(k\tau-\gamma(1+\tau))/k\tau} \\
&&= \left\Vert \det S^{(k)} \right\Vert^{\gamma (1 + \tau) /2k\tau} _{L^2(h e^{-  \phi} ,dV^{\otimes N})} \left( \int_{X^N} F^{k\tau / (k\tau-\gamma(1+\tau))} dV^{\otimes N} \right)^{(k\tau-\gamma(1+\tau))/k\tau} .
\end{eqnarray*}

Thus, we obtain the following inequality.
\begin{eqnarray*}
&&\hspace{-20pt} -\frac{1}{\gamma(1+\tau) } \log \mathcal{Z}_{N, f}(-\gamma ( 1+\tau) ) \\
&&\leq \frac{1}{k} \log \left\Vert \det S^{(k)}  \right\Vert^{2}_{L^2 ( h e^{-  \phi},  dV^{\otimes N})} + \frac{k\tau-\gamma(1+\tau)}{\gamma k (1+\tau)} \log \int_{X^N} F^{k\tau / (k\tau-\gamma(1+\tau))}  dV^{\otimes N}.
\end{eqnarray*}

Secondly, we consider the general case.
If the measure $e^{-\gamma \phi} dV_b$ is not a probability measure, by considering the normalization
$$
\phi \mapsto \phi + \frac{1}{\gamma}  \log \int_X e^{-\gamma \phi} dV_f,
$$
we have
\begin{eqnarray*}
&&\hspace{-20pt}-\frac{1}{\gamma(1+\tau)} \log \mathcal{Z}_{N,f} (-\gamma(1+\tau))\\
&&\hspace{-10pt}\leq \frac{1}{k} \log \left\Vert \det S^{(k)}  \right\Vert^{2}_{L^2 (h e^{-  \phi},  dV^{\otimes N})} - \frac{N}{ \gamma } \log \int_X e^{-\gamma \phi} dV_f + \frac{k\tau-\gamma(1+\tau)}{\gamma k (1+\tau)} \log \int_{X^N} F^{k\tau / (k\tau-\gamma(1+\tau))}  dV^{\otimes N}. \\
\end{eqnarray*}
By the formula in Berman-Boucksom \cite[Lemma 4.3]{BB} :
$$
\left\Vert \det S^{(k)}  \right\Vert^{2}_{L^2 ( h e^{-  \phi},  dV^{\otimes N})} = N ! \det H^{(k)} (\phi,  dV),
$$
we have
\begin{eqnarray*}
&& \hspace{-30pt} -\frac{1}{\gamma(1+\tau) N } \log \mathcal{Z}_{N, f} (-\gamma(1+\tau)) \\ 
&&\leq \frac{1}{N k} \log \det H^{(k)} (\phi, dV)  - \frac{1}{ \gamma} \log \int_X e^{-\gamma \phi} dV_f + \frac{1}{Nk} \log N! \\
&&\hspace{40pt}+ \frac{k\tau-\gamma(1+\tau)}{\gamma kN (1+\tau)} \log \int_{X^N} F^{k\tau / (k\tau-\gamma(1+\tau))}  dV^{\otimes N} \\
&&\leq -E_{k} (\phi)  - \frac{1}{ \gamma} \log \int_X e^{-\gamma \phi} dV_f + \frac{1}{k} \log N + \frac{k\tau-\gamma(1+\tau)}{\gamma k (1+\tau)} \log \int_{X} f^{k\tau / (k\tau-\gamma(1+\tau))}  dV \\
&&= \mathcal{D}_{k,-\gamma, f} (\phi ) + \frac{1}{k} \log N + \frac{k\tau-\gamma(1+\tau)}{\gamma k (1+\tau)} \log \int_{X} f^{k\tau / (k\tau-\gamma(1+\tau))}  dV .  
\end{eqnarray*}
Here, we have used the Fubini theorem for $F (x) = \prod_{i=1}^{N} f (x_i) $ and the Stirling formula $\log N! \approx N \log N - N$.
\end{proof}

In order to compare the twisted Ding functional $\mathcal{D}_{-\gamma, f}$ and the $k$-th approximate Ding functional $\mathcal{D}_{k, -\gamma,f}$, we need the lemma below.
We fix $k \in \mathbb{Z}_{>0}$ and $c \in \mathbb{R}_{>0}$ such that $c \omega + {\rm Ric} \, dV \geq 0$.
We set $\epsilon := c k^{-1}$ and $\phi^{(\epsilon)} := (1-\epsilon) \phi$.

\begin{lemma}
\label{energy}
There exists $C_0$ depends only on $\omega$ such that
$$
-\frac{1}{1-\epsilon}  E_{k} \left( \phi^{(\epsilon)} \right) \leq - E (\phi) + C_0 k^{-1} \left( - E (\phi)  + \sup_X \phi \right).
$$

\end{lemma}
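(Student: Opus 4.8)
The plan is to reformulate the claim as a lower bound for the quantized energy: since $\epsilon<1$, the asserted inequality is equivalent to $\tfrac{1}{1-\epsilon}E_{k}(\phi^{(\epsilon)})\ge E(\phi)-C_0k^{-1}(-E(\phi)+\sup_X\phi)$. The natural engine is the first-variation formula. Normalizing $E_k(0)=0$ by taking the basis $(s_i^{(k)})$ orthonormal for $L^2(h,dV)$, and writing $\beta_k(\psi):=N^{-1}\rho_k(\psi)\,dV$ for the Bergman probability measure attached to the weight $(he^{-\psi})^k$ and the fixed $dV$, one has $\tfrac{d}{dt}E_k(\psi_t)=\int_X\dot\psi_t\,\beta_k(\psi_t)$, in exact parallel with $\tfrac{d}{dt}E(\psi_t)=V^{-1}\int_X\dot\psi_t\,\omega_{\psi_t}^n$. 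Thus $E_k$ is the quantization of $E$, its density $\beta_k$ a smoothed Monge--Amp\`ere measure, and integrating along the segment $\phi_t:=t\phi^{(\epsilon)}$, $t\in[0,1]$, reduces everything to comparing the two probability measures $\beta_k(\phi_t)$ and $V^{-1}\omega_{\phi_t}^n$.

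The decisive step, and the whole reason for the twist $\phi^{(\epsilon)}=(1-\epsilon)\phi$ with $\epsilon=ck^{-1}$, is a curvature computation along this segment. The weight $(he^{-\phi_t})^k$ has curvature $k\omega_{\phi_t}$, while $\mathrm{Ric}\,dV\ge-c\omega=-k\epsilon\,\omega$ by hypothesis. Using $kt(1-\epsilon)\dol\phi=kt(1-\epsilon)(\omega_\phi-\omega)$ I would show that the effective curvature governing the Bergman kernel stays nonnegative all the way along the path:
\[
k\,\omega_{\phi_t}+\mathrm{Ric}\,dV\;\ge\; k(1-\epsilon)(1-t)\,\omega+kt(1-\epsilon)\,\omega_\phi\;\ge\;0 ,\qquad t\in[0,1].
\]
The point is that the twist converts the positivity hypothesis $c\omega+\mathrm{Ric}\,dV\ge0$, stated relative to the fixed $\omega$, into a genuine nonnegativity of $k\omega_{\phi_t}+\mathrm{Ric}\,dV$ uniformly in $t$, expressed as a sum of two manifestly nonnegative forms. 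This is exactly the condition under which the derivative-free Bergman estimates apply simultaneously at every point of the segment.

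With this positivity secured, I would invoke the two complementary Bergman kernel bounds available along $\phi_t$ — the sub-mean-value (upper) estimate and the H\"ormander/Ohsawa--Takegoshi $L^2$-extension (lower) estimate, as in Berman--Boucksom \cite{BB} and Berman \cite{Be2} — to compare $\beta_k(\phi_t)$ with $V^{-1}\omega_{\phi_t}^n$ up to an $O(k^{-1})$ discrepancy in a form that integrates against $\phi^{(\epsilon)}$. Integrating the first-variation identity then yields $E_k(\phi^{(\epsilon)})\ge E(\phi^{(\epsilon)})-O(k^{-1})(-E(\phi^{(\epsilon)})+\sup_X\phi^{(\epsilon)})$. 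The leading term is handled by concavity of $s\mapsto E(s\phi)$ together with $E(0)=0$, which gives $E(\phi^{(\epsilon)})=E((1-\epsilon)\phi)\ge(1-\epsilon)E(\phi)$, so that after dividing by $1-\epsilon$ the main term is precisely $E(\phi)$.

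Finally I would package the error. The $O(k^{-1})$ remainder is, by construction, controlled by the $J$-functional, and Lemma~\ref{jen} ($-E(\phi)+\sup_X\phi\le J(\phi)+C$) lets me re-express it in the stated form $C_0k^{-1}(-E(\phi)+\sup_X\phi)$ with $C_0$ depending only on $\omega$. The hard part is this last comparison: obtaining a bound on $\beta_k(\phi_t)-V^{-1}\omega_{\phi_t}^n$ whose error scales like $k^{-1}$ and is governed \emph{solely} by $-E(\phi)+\sup_X\phi$, uniformly and without any dependence on derivatives of $\phi$. A naive appeal to the Tian--Yau--Zelditch expansion fails, since its remainder involves higher derivatives of $\phi$; it is exactly the robust, derivative-free sub-mean-value and extension estimates, made applicable by the path-positivity arranged through the twist, that supply the required control.
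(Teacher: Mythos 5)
Your curvature computation along the affine path $\phi_t=t\phi^{(\epsilon)}$ is correct, and it is the same algebra the paper performs (there along a weak geodesic): $k\omega_{\phi_t}+\mathrm{Ric}\,dV\ge k(1-\epsilon)(1-t)\omega+kt(1-\epsilon)\omega_\phi\ge 0$. But the step you propose to cash this in with is a genuine gap. You need the Bergman probability measure $\beta_k(\phi_t)$ to agree with $V^{-1}\omega_{\phi_t}^n$ up to an error of order $k^{-1}$, uniformly in $t\in[0,1]$ and in $\phi$, with the integrated error governed solely by $-E(\phi)+\sup_X\phi$. The sub-mean-value bound and Ohsawa--Takegoshi extension do not deliver this: they are derivative-free precisely because they are crude, giving one-sided bounds that cannot resolve the density $\omega_{\phi_t}^n$ to accuracy $O(k^{-1})$ at a varying weight. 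A two-sided $O(k^{-1})$-accurate comparison of $\beta_k(\psi)$ with $V^{-1}\omega_\psi^n$ \emph{is} the Tian--Yau--Zelditch expansion, whose remainder involves derivatives of $\psi$ --- which you yourself rule out in your final paragraph. So your plan reformulates the lemma as an unproven (and, in the derivative-free form you require, unavailable) kernel estimate along the whole path, and then asserts rather than proves it.

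The paper's proof avoids any kernel estimate at the varying metrics, and this is the idea your proposal is missing: Berndtsson's positivity of direct image bundles \cite{Bernd}. The path is taken to be a \emph{weak geodesic} $\psi_t$ from $0$ to $\phi$ --- not the affine segment; note that $(t,x)\mapsto t\phi^{(\epsilon)}(x)$ is not plurisubharmonic in the $t$-direction, so Berndtsson's theorem is unavailable along your path. The positivity
$$
k(1-\epsilon)\left( p^*\omega+\dol\psi_t \right)+p^*\left( c\,\omega+\mathrm{Ric}\,dV \right)\ \ge\ 0
$$
then shows that $t\mapsto E_k(\psi_t^{(\epsilon)})$ is convex, hence $g(t):=-\frac{1}{1-\epsilon}E_k(\psi_t^{(\epsilon)})+E(\psi_t)$ is concave (as $E$ is affine along weak geodesics), and $g(1)\le g(0)+g'(0^+)$ reduces the whole lemma to the derivative at the fixed endpoint $t=0$. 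There the relevant Bergman measure is the one attached to the \emph{fixed} smooth data $(h^k,dV)$, where the $O(k^{-1})$ expansion holds with constants depending only on $\omega$ and $dV$; combined with $\dot\psi_0\le\sup_X\phi$ and the fact that the (constant) derivative of $E$ along the geodesic equals $E(\phi)$, this produces exactly the error $C_0k^{-1}\left(-E(\phi)+\sup_X\phi\right)$. This endpoint computation is Berman's \cite[Lemma 2.4]{Be2}, to which the paper defers after the curvature check. To repair your argument, replace the affine segment by the weak geodesic and the path-long kernel comparison by this single convexity-plus-endpoint-derivative step.
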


\begin{rem}
The proof of this lemma is essentially same as the proof in \cite[Lemma 2.4]{Be2}.
In our case, we consider the energy functional $E_k $ defined by the {\bf fixed} smooth probability measure $dV$, i.e., independent of $\gamma$ (compere the functional $\mathcal{E}_k$ in \cite{Be2}).
In this paper, we only check the positivity of some curvature form along a weak geodesic in order to apply the result of positivity of direct image sheaves \cite{Bernd}.
\end{rem} 

\begin{proof}
We check the positivity of the curvature form of some Hermitian metric on $kL - K_X$.
Let $\psi_t$ be a weak geodesic connecting $0$ and $\phi$ (see \cite{Chen} and \cite[Section 3]{Da}), so $\psi_t$ is a $p^* \omega$-plurisubharmonic function on $X \times [0,1]\times i \mathbb{R}$ which is independent of the imaginary part.
Here, $p : X \times [0,1]\times i \mathbb{R} \to X$ is the projection.
Since $\det H \left( \psi_t^{(\epsilon)}, dV \right)$ can be identified with an $L^2$-metric on $\det H^0 (X,kL  )$, if the metric $( h e^{- \psi_t^{(\epsilon)} } )^k  dV$ has a positive curvature, then the function $t \mapsto E_k ( \psi_t^{(\epsilon)})$ is convex by \cite{Bernd}.
From the assumption that $\epsilon = c k^{-1}$, we can compute as follows :
\begin{eqnarray*}
&& \Theta \left( p^*(kL - K_X), ( h e^{- \psi_t^{(\epsilon)} } )^k  dV \right) \\
&=& k (p^*\omega + \dol \psi_t^{(\epsilon)}) + p^*{\rm Ric}\, dV \\
&=& k(1-\epsilon) ( p^*\omega  + \dol \psi_t)+ p^* (c \omega + {\rm Ric}\, dV) \geq 0.
\end{eqnarray*}
Thus, the function
$$-\frac{1}{1-\epsilon}  E_{k} \left( \psi_{t}^{(\epsilon)} \right) + E (\psi_t)
$$
is concave in the variable $t$ because the Aubin-Mabuchi energy $E$ is affine along weak geodesics.
The rest of the proof is completely same as the proof of \cite[Lemma 2.4]{Be2}.
\end{proof}

We give the following inequality between the $k$-th approximate Ding functional and the twisted Ding functional.

\begin{prop}
\label{twist and approx}
Set $\gamma^{(\epsilon)} := (1-\epsilon)\gamma$.
There exists a positive constant $C_1$ such that
$$
\gamma  \mathcal{D}_{k,-\gamma, f} (\phi ) \leq  \gamma^{(\epsilon)} \left( \mathcal{D}_{-\gamma^{(\epsilon)}, f} (\phi) + C_1 k^{-1} \mathcal{J}_{\omega} (\phi)  + C_1 \right)
$$
for any $\phi \in \mathcal{H}(L)$.
\end{prop}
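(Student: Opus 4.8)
The plan is to unfold both Ding functionals and reduce the claim to Lemma~\ref{energy} after aligning the two inverse temperatures. Unfolding the definitions, $\gamma\mathcal D_{k,-\gamma,f}(\psi) = -\gamma E_k(\psi) - \log\int_X e^{-\gamma\psi}\,dV_f$, whereas $\gamma^{(\epsilon)}\mathcal D_{-\gamma^{(\epsilon)},f}(\phi) = -\gamma^{(\epsilon)}E(\phi)-\log\int_X e^{-\gamma^{(\epsilon)}\phi}\,dV_f$, so the two sides differ in two respects: the level-$k$ energy $E_k$ versus the Aubin--Mabuchi energy $E$, and the inverse temperature $\gamma$ versus $\gamma^{(\epsilon)}=(1-\epsilon)\gamma$. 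The device that reconciles the second discrepancy is the scaling identity $\gamma\,\phi^{(\epsilon)} = \gamma(1-\epsilon)\phi = \gamma^{(\epsilon)}\phi$. Accordingly I would run the argument with the scaled potential $\phi^{(\epsilon)}=(1-\epsilon)\phi$ (which lies in $\mathcal H(L)$, since $\omega_{\phi^{(\epsilon)}}=\epsilon\omega+(1-\epsilon)\omega_\phi>0$ for $\epsilon=ck^{-1}\in(0,1)$), so that the exponential factor becomes $e^{-\gamma\phi^{(\epsilon)}}=e^{-\gamma^{(\epsilon)}\phi}$ and the two log-integral terms coincide exactly.

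With that alignment the entropy-type terms match and only the energies remain to be compared. First I would rewrite $-\gamma E_k(\phi^{(\epsilon)}) = \gamma^{(\epsilon)}\big(-\tfrac{1}{1-\epsilon}E_k(\phi^{(\epsilon)})\big)$, using $\gamma/\gamma^{(\epsilon)}=1/(1-\epsilon)$. Since $\gamma^{(\epsilon)}>0$ (here one uses $k>c$ so that $\epsilon<1$), multiplying the estimate of Lemma~\ref{energy} by $\gamma^{(\epsilon)}$ preserves its direction and yields
\[
-\gamma E_k(\phi^{(\epsilon)}) \;\le\; \gamma^{(\epsilon)}\Big(-E(\phi) + C_0 k^{-1}\big(-E(\phi)+\sup_X\phi\big)\Big).
\]
Adding the common term $-\log\int_X e^{-\gamma^{(\epsilon)}\phi}\,dV_f$ to both sides and regrouping, the leading contribution $-\gamma^{(\epsilon)}E(\phi)-\log\int_X e^{-\gamma^{(\epsilon)}\phi}\,dV_f$ is exactly $\gamma^{(\epsilon)}\mathcal D_{-\gamma^{(\epsilon)},f}(\phi)$, the main term on the right-hand side.

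It then remains to absorb the error $\gamma^{(\epsilon)}C_0 k^{-1}\big(-E(\phi)+\sup_X\phi\big)$ into the asserted $\gamma^{(\epsilon)}\big(C_1k^{-1}\mathcal J_\omega(\phi)+C_1\big)$. For this I would chain Lemma~\ref{jen}, giving $-E(\phi)+\sup_X\phi\le J(\phi)+C$, with Lemma~\ref{Jcoer} in the form $J(\phi)\le A^{-1}\big(\mathcal J_\omega(\phi)+B\big)$, to obtain $-E(\phi)+\sup_X\phi\le A^{-1}\mathcal J_\omega(\phi)+(A^{-1}B+C)$. Choosing $C_1:=\max\{C_0A^{-1},\,C_0(A^{-1}B+C)\}$ and using $k^{-1}\le 1$ then closes the estimate, since $C_0k^{-1}(A^{-1}B+C)\le C_1$.

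The computation is essentially bookkeeping once the scaling is in place, and the genuine engine (the convexity of $E_k$ along weak geodesics via positivity of direct images) has already been packaged into Lemma~\ref{energy}. The only delicate point is the alignment of the two inverse temperatures: it is precisely what forces one to evaluate the approximate Ding functional at $\phi^{(\epsilon)}$ rather than at $\phi$, and it is what produces the overall factor $\gamma^{(\epsilon)}=\gamma(1-ck^{-1})$ together with the further $(1-C_1k^{-1})$ shift of the twisting class, i.e. the exponent $\gamma'=\gamma(1-ck^{-1})(1-C_1k^{-1})$ appearing in Theorem~\ref{main theorem}. I would therefore keep careful track of the sign of $\gamma^{(\epsilon)}$, since this positivity is exactly what allows Lemma~\ref{energy} to be multiplied through without reversing the inequality.
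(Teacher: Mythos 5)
Your proposal is correct and follows essentially the same route as the paper: evaluate the approximate Ding functional at the scaled potential $\phi^{(\epsilon)}$ so that $\gamma\phi^{(\epsilon)}=\gamma^{(\epsilon)}\phi$ aligns the log-integral terms, apply Lemma \ref{energy} (multiplied by $\gamma^{(\epsilon)}>0$), and absorb the error via Lemma \ref{jen} combined with Lemma \ref{Jcoer}. The only caveat, which the paper is equally casual about, is that your explicit choice $C_1:=\max\{C_0A^{-1},C_0(A^{-1}B+C)\}$ needs a one-line patch when $\mathcal{J}_\omega(\phi)<0$ (enlarging the coefficient of $\mathcal{J}_\omega$ then worsens the bound; one instead uses $\mathcal{J}_\omega(\phi)\geq -B$ from Lemma \ref{Jcoer} to absorb the discrepancy into the constant), a harmless adjustment that does not affect the argument.
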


\begin{proof}

By applying the inequality in Proposition \ref{approx Ding} for $\phi^{(\epsilon)} = (1 - \epsilon) \phi$ and Lemma \ref{energy}, we can compute as follows :
\begin{eqnarray*}
&&\hspace{-50pt}\gamma \mathcal{D}_{k,-\gamma,f} (\phi^{(\epsilon)} ) \\
&=& \gamma  \left( -E_{k} (\phi^{(\epsilon)})  - \frac{1}{ \gamma} \log \int_X e^{-\gamma \phi^{(\epsilon)}} dV_f \right) \\
&=&  \gamma^{(\epsilon)} \left( - \frac{1}{1 - \epsilon}E_{k} (\phi^{(\epsilon)})  - \frac{1}{ \gamma^{(\epsilon)}} \log \int_X e^{-\gamma^{(\epsilon)} \phi} dV_f \right) \\
&\leq& \gamma^{(\epsilon)} \left(  - E (\phi)   - \frac{1}{ \gamma^{(\epsilon)}} \log \int_X e^{-\gamma^{(\epsilon)} \phi} dV_f +C_0 k^{-1} \left( - E (\phi) + \sup_X \phi \right) \right)\\
&\leq& \gamma^{(\epsilon)} \left( \mathcal{D}_{-\gamma^{(\epsilon)}, f} (\phi) + C_1 k^{-1} \mathcal{J}_{\omega} (\phi)  + C_1 k^{-1} \right).
\end{eqnarray*}
Here, we have used the combination of Lemma \ref{Jcoer} and Lemma \ref{jen} in the last inequality.
\end{proof}

Finally, we give the proof of the main result in this paper and Corollary \ref{large degree}.

\medskip

\hspace{-21pt} {\it Proof of Theorem \ref{main theorem}.}

\medskip

Recall that $\gamma^\prime = \gamma (1-ck^{-1})(1-C_1 k^{-1})= \gamma^{(\epsilon)} (1 - C_1 k^{-1})$.
Proposition \ref{approx Ding} and Proposition \ref{twist and approx} imply that
\begin{eqnarray*}
&&\hspace{-25pt}-\frac{1}{N(1+\tau)} \log \mathcal{Z}_{N, f} (-\gamma(1+\tau)) +  \mathcal{J}_{-{\rm Ric} \, dV + \eta + \gamma^\prime \omega } (\phi)\\
&&\leq  \gamma^{(\epsilon)} \mathcal{D}_{- \gamma^{(\epsilon)}, f} (\phi) +  \gamma^{(\epsilon)} C_1 k^{-1} \mathcal{J}_{\omega} (\phi)  + \gamma^{(\epsilon)}C_1 + \mathcal{J}_{-{\rm Ric} \, dV + \eta + \gamma^\prime \omega } (\phi) \\
&& \hspace{20pt}+ \frac{k\tau-\gamma(1+\tau)}{ k(1+\tau)} \log \int_{X} f^{k\tau / (k\tau-\gamma(1+\tau))}  dV+ \frac{\gamma}{k} \log N\\
&&= \gamma^{(\epsilon)} \mathcal{D}_{- \gamma^{(\epsilon)}, f} (\phi) + \mathcal{J}_{-{\rm Ric} \, dV + \eta + \gamma^{(\epsilon)} \omega } (\phi)  + \gamma^{(\epsilon)}C_1  \\
&&\hspace{20pt} + \frac{k\tau-\gamma(1+\tau)}{ k(1+\tau)} \log \int_{X} f^{k\tau / (k\tau-\gamma(1+\tau))}  dV+ \frac{\gamma}{k} \log N\\
&&\leq  \mathcal{M}_{f,\eta}( \phi) + \gamma^{(\epsilon)}C_1 + \frac{k\tau-\gamma(1+\tau)}{ k(1+\tau)} \log \int_{X} f^{k\tau / (k\tau-\gamma(1+\tau))}  dV+ \frac{\gamma}{k} \log N. 
\end{eqnarray*}
Here, we use the Proposition \ref{Mabuchi and Ding} in the last inequality
Thus, we have finished the proof of Theorem \ref{main theorem}.\sq

\medskip

\hspace{-21pt} {\it Proof of Corollary \ref{large degree}.}

\medskip

For simplicity, we write $s_{F} (L) := \{ s \in \mathbb{R} \, |  \, F -sL >0  \} $ for a divisor $F$.
We can easily check that $s_{F_1} (L) + s_{F_2} (L) \leq s_{F_1 + F_2} (L)$ for every $F_1, F_2$.
So, we have
\begin{eqnarray*}
s_b (L) = s_{-(K_X + (1-b)D)}(L) \geq s_{-K_X} (L) + s_{- (1-b) D} (L).
\end{eqnarray*}
Since $D \in |mL|$ and the fact that $s_{aL} (L) = a$ for any $a\in \mathbb{R}$, we have
\begin{eqnarray*}
s_b (L) \geq  s_{-K_X} (L) + s_{- (1-b) mL} (L) =  s_{-K_X} (L) - (1-b) m.
\end{eqnarray*}
By the equality
$$
\mu_b (L) = \frac{-(K_X + (1-b)D)L^{n-1}}{L^n}=\frac{-(K_X + (1-b)mL)L^{n-1}}{L^n} = \frac{-K_X L^{n-1}}{L^n} - m (1-b),
$$
we obtain the following inequality
\begin{eqnarray*}
n \mu_b (L) - (n-1) s_b (L)
&\leq&n \frac{-K_X L^{n-1}}{L^n}  -  (n-1) s_{-K_X} (L) - (1-b) m \\
&=&n \mu (L) - (n-1) s (L) - (1-b) m.
\end{eqnarray*}
So, we can find a sufficiently large integer $m_0$ so that $n \mu (L) - (n-1) s (L) - (1-b) m_0 < 0$ and $K_X + m_0L$ is ample.
Since $\gamma_b (L) > 0$ (in fact, $\gamma_{f_b}(L)$ is bounded below by the $\alpha$-invariant (see \cite{Be2})), we have finished the proof of Corollary \ref{large degree} by using Corollary \ref{cone cscK}.
(Note that $\gamma_{f_b}(L)$ depends on the choice of $m$ and $D \in |mL|$.)
\sq

%%%%REFERENCE%%%%%%%%%%%%%%%%%%%%%%%%%%%%%%%%%%%%%%%%%%%%%%%%%%%%%%%%%%%%%%%%%%%%%%%

\bigskip
\address{
National Institute of Technology (KOSEN),\\
Wakayama College,\\
77, Nojima, Nada-chou, Gobo-shi\\
Wakayama, 644-0023\\
Japan
}
{aoi@wakayama-nct.ac.jp \,\, {\it or} \,\, takahiro.aoi.math@gmail.com }

\end{document}